\newcommand{\sq}{\varepsilon}
\newcommand{\ssq}{\sqrt{\varepsilon}}
\newcommand{\uN}{U}
\newcommand{\pN}{P}
\newcommand{\qN}{Q}
\newcommand{\wN}{\bm{W}}
\newcommand{\vph}{\bm{\mathbbm{z}}}
\newcommand{\vphu}{\mathbbm{v}}
\newcommand{\vphq}{\mathbbm{r}}
\newcommand{\vphp}{\mathbbm{s}}
\newcommand{\dual}[2]{\left\langle#1,#2\right\rangle}
\newcommand{\norm}   [1] {\left\Vert#1\right\Vert}
\newcommand{\enormsharp}   [1] {\interleave #1\interleave_{\sharp}}
\newcommand{\enorm}   [1] {\interleave #1\interleave_{E}}
\newcommand{\lnorm}   [1] {\interleave #1\interleave_{2}}
\newcommand{\jump}   [1] {[\![#1]\!]}
\newcommand{\af}{a_\textsf{1}}
\newcommand{\as}{a_\textsf{2}}
\newcommand{\prou}{\Pi^-}
\newcommand{\prop}{\Pi_x^+}
\newcommand{\proq}{\Pi_y^+}
\newcommand{\sobv}{\mathcal{V}_N}
\newcommand{\Bln}[2]{B(#1;#2)}
\newcommand{\spc}{\mathcal{V}_N}
\newtheorem  {proposition} {\hspace{15pt}Proposition}[section]
\newtheorem  {theorem}    {\hspace{15pt} Theorem}[section]
\newtheorem  {remark}     {\hspace{15pt} Remark}[section]
\newtheorem  {lemma}     {\hspace{15pt} Lemma}[section]
\newtheorem  {assumption}     {\hspace{15pt} Assumption}[section]
\newtheorem  {example}     {\hspace{15pt} Example}[section]
\title{\textbf{Supercloseness of the local discontinuous Galerkin method
	for a singularly perturbed convection-diffusion problem}
}
\author{
Yao Cheng\footnotemark[1]
\quad
Shan Jiang\footnotemark[2]
\quad
Martin Stynes\footnotemark[3]
}
\begin{document}

\maketitle

\footnotetext[1]
{School of Mathematical Sciences,
Suzhou University of Science and Technology,
Suzhou 215009, Jiangsu Province, China;
Research supported by NSFC grant 11801396, Natural Science Foundation of Jiangsu Province grant BK20170374, Natural Science Foundation of the Jiangsu Higher Education Institutions of China grant 17KJB110016;
\textbf{ycheng@usts.edu.cn} 
}
\footnotetext[2]
{School of Science, Nantong University, Nantong 226019, Jiangsu Province, China; Research supported by NSFC grant 11771224; \textbf{jiangshan@ntu.edu.cn}
}

\footnotetext[3]
{Corresponding author. Applied and Computational Mathematics Division, Beijing Computational Science Research Center, 
	Beijing 100193, China; Research supported by NSFC grants 12171025 and NSAF-U1930402; \textbf{m.stynes@csrc.ac.cn}
}


\begin{abstract}
A singularly perturbed convection-diffusion problem posed on the unit square in~$\mathbb{R}^2$, whose solution has exponential boundary layers, is solved numerically using the local discontinuous Galerkin (LDG) method with piecewise polynomials of degree at most~$k>0$ on three families of layer-adapted meshes: Shishkin-type, Bakhvalov-Shishkin-type and Bakhvalov-type.
On Shishkin-type meshes
this method is known to be  no greater than $O(N^{-(k+1/2)})$ accurate in the energy norm induced by the bilinear form of the weak formulation,  where $N$ mesh intervals are used in each coordinate direction. (Note: all bounds in this abstract are uniform in the singular perturbation parameter and neglect logarithmic factors that will appear in our detailed analysis.)
A delicate argument is used in this paper to establish $O(N^{-(k+1)})$ energy-norm superconvergence on all three types of mesh for the difference between the LDG solution and a local Gauss-Radau projection of the exact solution into the finite element space.
This supercloseness property implies a new $N^{-(k+1)}$ bound for the $L^2$ error between the LDG solution on each type of mesh and the exact solution of the problem; this bound is optimal (up to logarithmic factors).
Numerical experiments confirm our theoretical results.
\end{abstract}

\noindent\emph{2020 Mathematics Subject Classification:} Primary 65N15, 65N30.\\

\noindent\emph{Keywords:} 
Local discontinuous Galerkin method,
convection-diffusion,
singularly perturbed,
layer-adapted meshes,
superconvergence,
supercloseness,
Gauss-Radau projection

\section{Introduction}
\label{intro}

Consider the singularly perturbed convection-diffusion problem
\begin{subequations}\label{cd:spp:2d}
	\begin{align}
	-\varepsilon \Delta u + \textbf{a} \cdot \nabla u + b u
	&= f \  \text{ in }\Omega=(0,1)\times (0,1),   \label{spp:pde}
	\\
	u &= 0\ \text{ on } \partial\Omega,  \label{spp:bc}
	\end{align}
\end{subequations}
where $\varepsilon>0$ is a small parameter, 
$\textbf{a}(x,y)=(\af(x,y),\as(x,y))\geq(\alpha_1,\alpha_2)>(0,0)$,
and 
\begin{equation}\label{assumption:coef:2d}
b(x,y)- \frac12 \nabla\cdot \textbf{a}(x,y) \geq \beta>0,
\end{equation}
for any $(x,y)\in \overline{\Omega}$.
Here $\alpha_1,\alpha_2$ and $\beta$ are some positive constants.
We assume that $\textbf{a}$, $b$ and $f$ are sufficiently smooth.
With these assumptions, it is straightforward to use the Lax-Milgram lemma to show that \eqref{cd:spp:2d}
has  a unique weak solution in $H_0^1(\Omega)\cap H_2(\Omega)$.
Note that when $\varepsilon>0$ is sufficiently small, 
condition (\ref{assumption:coef:2d}) can always be ensured
by a simple transformation
$u(x,y)=e^{t(x+y)}v$ with a suitably chosen positive constant~$t$ such that
\[
-2\varepsilon t^2+(\af+\as)t+ b- \frac12 \nabla\cdot \textbf{a} \geq \beta.
\]

The problem \eqref{cd:spp:2d} has been widely studied since it is a basic model for several applications such as the linearised Navier-Stokes equations  at high Reynolds number~\cite{Roos2008}.  
Its solution $u$ usually exhibits a boundary layer, i.e., although $u$ is bounded, it can have large derivatives in thin layer regions near certain parts of the boundary~$\partial\Omega$; see Proposition~\ref{proposition:reg:2d} below.

When solving \eqref{cd:spp:2d} numerically, the smallness of the coefficient $\varepsilon$ of the diffusion term in~\eqref{spp:pde} reduces the stability of  standard methods,  
so computed solutions may exhibit severe numerical oscillations unless one modifies the method and/or the mesh to exclude such behaviour. 
Consequently many special numerical methods have been developed to compute accurate solutions of~\eqref{cd:spp:2d}; see \cite{JKN18,Linss10,Miller2012,Roos2008,StySty18}  and their references.

One such method is the local discontinuous Galerkin (LDG) finite element method, which is stabilised across the element interfaces \cite{Cockburn:Shu:LDG}. 
The LDG method has several desirable properties 
such as strong stability,
high order accuracy, flexibility of $h–p$ adaptivity
and local solvability. 
Thus it is suited to problems whose solutions have layers or large gradients.

In recent years, the LDG method has been used to solve singularly perturbed problems such as~\eqref{cd:spp:2d} whose solutions exhibit  boundary layers; see \cite{CYWL22} and its references. For this type of problem,
numerical results show that the LDG solution computed on a uniform mesh does not oscillate~\cite{CYWL22,Xie2009JCM}. 
When the LDG method is used to solve~\eqref{cd:spp:2d} on a Shishkin mesh (S-mesh), then~\cite{Zhu:2dMC} the error of the computed solution, measured in the energy norm induced by the bilinear form of the weak formulation,
converges with rate $(N^{-1}\ln N)^{k+1/2}$, uniformly in the singular perturbation parameter~$\varepsilon$, when tensor-product piecewise polynomials of degree at most $k$ are used; here $N$~is the number of elements  in each coordinate direction. This result is generalised in~\cite{Cheng2021:Calcolo,Cheng2020} to other layer-adapted meshes including the Bakhvalov-Shishkin  mesh (BS-mesh) and Bakhvalov-type mesh (B-mesh), 
using the detailed mesh structure and properties of the local or generalized Gauss-Radau projection to prove convergence of order $(N^{-1}\max|\psi^{\prime}|)^{k+1/2}$ in the energy norm, where $\psi$ is the mesh-characterizing function which will be defined in Section~\ref{subsec:layer:adapted:meshes}.

Superconvergence of the LDG solution is considered in~\cite{Xie2010MC,Xie2009JCM,Zhu:2018}, where it is shown that for the one-dimensional analogue of~\eqref{cd:spp:2d}, the computed solution attains nodal superconvergence of 
order $(N^{-1}\ln N)^{2k+1}$ on the S-mesh. In addition, 
a convergence rate of order $(N^{-1}\max|\psi^{\prime}|)^{k+1}$ in the $L^2$ error was observed numerically in~\cite{Cheng2021:Calcolo}
for the S-type, BS-type and B-type layer-adapted meshes, but for \eqref{cd:spp:2d} the only theoretical result for the  $L^2$ error is a suboptimal convergence rate of order $(N^{-1}\max|\psi^{\prime}|)^{k+1/2}$, which follows trivially from the energy-norm convergence results of~\cite{Cheng2021:Calcolo,Cheng2020,Zhu:2dMC}.

No energy-norm superconvergence result has been proved for the LDG method applied to~\eqref{cd:spp:2d} on any layer-adapted mesh. Our paper will fill this gap in the LDG theory by considering the difference between the numerical solution and the local Gauss-Radau projection of the exact solution and proving energy-norm superconvergence of order 
$(N^{-1}\max|\psi^{\prime}|)^{k+1}$ for the three layer-adapted meshes of \cite{Cheng2020}, uniformly in~$\varepsilon$ for the S-mesh and BS-mesh and almost uniform in~$\varepsilon$ for the B-mesh; see Theorem~\ref{thm:superconvergent}.
This type of superconvergence is often described as \emph{supercloseness}; see \cite[p.395]{Roos2008}. The convergence rate is one half-order higher than the energy norm error between the computed and true solutions. It implies the new result that the $L^2$ error between the numerical and true solutions on these three meshes has the rate $(N^{-1}\max|\psi^{\prime}|)^{k+1}$, which we will show to be sharp in numerical experiments. 

Our approach is different from \cite{Zhu:2dMC}, where a streamline-diffusion-type norm
was used in the analysis of the convective error 
to absorb the derivative 
and the jump of the error in the finite element space.
It does not seem possible
to prove superconvergence using this stronger norm 
because the underlying inverse estimate
leads to a suboptimal bound in the analysis,  
so we turn to a different strategy: control of the derivative 
and the jump of the error in the finite element space 
by the energy-norm itself  plus a term with optimal convergence rate
(see Lemma~\ref{relation-to-auxiliary}).
Although there is still a negative power of the small parameter in this upper bound, we are nevertheless able to improve the error estimate for the convection term by using the small measure of the layer region; see~\eqref{T43} below.
We also have to handle the error jump across the finite element interfaces
in a sharp way. These innovations, combined with some slightly improved approximation properties,  
deliver the desired supercloseness result. 

The paper is organised as follows. In Section~\ref{sec:scheme}, 
we define the layer-adapted meshes and present the LDG method. 
Section~\ref{sec:prelim} is devoted to the proofs of several delicate technical results that will be needed later.
Our main  supercloseness result is derived in Section \ref{sec:proof}.  
In Section~\ref{sec:experiments},  
we present some numerical experiments to confirm our theoretical results. 
Finally,  Section~\ref{sec:conclusion} gives some concluding remarks.

\emph{Notation.} We use $C$ to denote a generic positive constant that may depend on the data $\textbf{a}, b,f$ of~\eqref{cd:spp:2d}, the parameter $\sigma$ of~\eqref{tau}, and the degree $k$ of the polynomials in our finite element space,  but is independent of $\varepsilon$ and of $N$ (the number of mesh intervals in each coordinate direction); $C$ can take different values in different places.

 The usual Sobolev spaces $W^{m,\ell}(D)$ and $L^{\ell}(D)$ will be used, where $D$ is any measurable two-dimensional subset of~$\Omega$. The $L^{2}(D)$ norm is denoted by $\norm{\cdot}_{D}$,
the $L^{\infty}(D)$ norm by $\norm{\cdot}_{L^\infty(D)}$,
and $\dual{\cdot}{\cdot}_D$ denotes the $L^2(D)$ inner product.
The subscript $D$ will always be dropped when $D= \Omega$.

\section{Layer-adapted meshes and the LDG method}
\label{sec:scheme}

Typical solutions $u$ of \eqref{cd:spp:2d} have exponential layers along the sides $x=1$ and $y=1$ of~$\Omega$, and a corner layer at $(x,y)=(1,1)$.
Many authors assume that  the solution $u$ can be decomposed as follows  into a smooth component $S$ 
and layer components $E_{21}, E_{12}$ and $ E_{22}$; 
see, e.g., \cite[Section 2.2]{Zhu:2dMC}.

\begin{proposition}\label{proposition:reg:2d}
	Let $m$ be a non-negative integer. Let $\kappa$ satisfy $0<\kappa<1$.
	Under certain smoothness and compatibility conditions on the data,
	the problem \eqref{cd:spp:2d} has a solution 
	$u$ in the H\"older space $C^{m+2,\kappa}(\Omega)$, and this solution can be decomposed as 	$u = S +E_{21}+E_{12}+E_{22}$, where
	\begin{subequations}\label{reg:u:2d}
		\begin{align}
		\label{reg:S}
		\left| \partial_x^{i}\partial_y^{j}S(x,y)\right|
		\leq&\; C,
		\\
		\label{reg:E21}
		\left| \partial_x^{i}\partial_y^{j}E_{21}(x,y)\right|
		\leq &\; C\varepsilon^{-i} e^{-\alpha_1(1-x)/\varepsilon},
		\\
		\label{reg:E12}
		\left| \partial_x^{i}\partial_y^{j}E_{12}(x,y)\right|
		\leq &\; C\varepsilon^{-j} e^{-\alpha_2(1-y)/\varepsilon},
		\\
		\label{reg:E22}
		\left| \partial_x^{i}\partial_y^{j}E_{22}(x,y)\right|
		\leq &\; C\varepsilon^{-(i+j)}e^{-[\alpha_1(1-x)+\alpha_2(1-y)]/\varepsilon},
		\end{align}
	\end{subequations}
	for all $(x,y)\in \bar{\Omega}$ and
	all nonnegative integers $i,j$ with $i+j\le m+2$. 
\end{proposition}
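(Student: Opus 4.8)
\emph{Proof sketch.} The plan is to establish the decomposition by the classical combination of an asymptotic expansion for the smooth part with exponential boundary and corner correctors, and then to control the remainder by a maximum-principle (barrier-function) argument. Writing $Lu := -\varepsilon\Delta u + \textbf{a}\cdot\nabla u + bu$, condition \eqref{assumption:coef:2d} (after an exponential substitution, if needed, to make the zero-order coefficient nonnegative) ensures that $L$ obeys a comparison principle on $\Omega$, so barrier arguments of the form ``$|L w|\le |L \phi|$ in $\Omega$ and $|w|\le \phi$ on $\partial\Omega$ imply $|w|\le \phi$ in $\bar\Omega$'' are available, with barriers built from constants and from the exponentials $e^{-\alpha_1(1-x)/\varepsilon}$, $e^{-\alpha_2(1-y)/\varepsilon}$.

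First I would construct $S$ via a formal expansion $S\sim\sum_{i\ge 0}\varepsilon^i S_i$, where $S_0$ solves the reduced first-order problem $\textbf{a}\cdot\nabla S_0 + b S_0 = f$ with $S_0=0$ on the inflow edges $x=0$ and $y=0$, and for $i\ge1$ the term $S_i$ solves $\textbf{a}\cdot\nabla S_i + b S_i = \Delta S_{i-1}$ with $S_i=0$ on $x=0,\ y=0$. Because $\textbf{a},b,f$ are smooth and satisfy the stated compatibility conditions at the corner $(0,0)$ and along the inflow edges, each $S_i$ is smooth on $\bar\Omega$ with $\partial_x^p\partial_y^q S_i$ bounded uniformly in $\varepsilon$; truncating, $S:=\sum_{i=0}^{m+2}\varepsilon^i S_i$ satisfies $LS = f - \varepsilon^{m+3}\Delta S_{m+2}$ in $\Omega$ with $S=0$ on the inflow edges, which gives \eqref{reg:S}. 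Next I would define the edge layer $E_{21}$ to cancel the trace of $S-u$ on $x=1$: expand $E_{21}\sim\sum_i\varepsilon^i E_{21,i}(\xi,y)$ in the stretched variable $\xi=(1-x)/\varepsilon$, where the leading profile solves the layer ODE $-\partial_\xi^2 E + \af(1,y)\,\partial_\xi E = 0$ (plus lower-order corrections) with the required trace at $\xi=0$ and decay as $\xi\to\infty$; this forces decay like $e^{-\alpha_1\xi}$, and since each $x$-derivative produces a factor $\varepsilon^{-1}$ while $y$-derivatives remain $O(1)$ we obtain \eqref{reg:E21}. The layer $E_{12}$ at the edge $y=1$ is constructed symmetrically, giving \eqref{reg:E12}, and the corner layer $E_{22}$, built in the doubly stretched variables $\big((1-x)/\varepsilon,(1-y)/\varepsilon\big)$ to remove the mismatch that $S+E_{21}+E_{12}$ still leaves along $x=1$ and $y=1$ near $(1,1)$, decays like the product of the two exponentials, which is \eqref{reg:E22}.

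Finally, put $R:=u-(S+E_{21}+E_{12}+E_{22})$. By construction $LR$ equals the small residual $-\varepsilon^{m+3}\Delta S_{m+2}$ together with the truncation errors of the layer expansions, all $O(\varepsilon^{m+1})$ in $L^\infty(\Omega)$ or better, and $R$ has boundary data that is $O(\varepsilon^{m+1})$; the comparison principle then yields $\norm{R}_{L^\infty(\Omega)}\le C\varepsilon^{m+1}$, and interior elliptic estimates applied on a mesh of $\varepsilon$-scaled balls covering the layer regions upgrade this to matching bounds on $\partial_x^i\partial_y^j R$ for $i+j\le m+2$, so $R$ may be absorbed into $S$. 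The main obstacle is precisely this last step together with the derivative bounds on the individual correctors: to produce the powers $\varepsilon^{-i}$, $\varepsilon^{-j}$ and $\varepsilon^{-(i+j)}$ one must differentiate the governing equations for $S_i$, $E_{21,i}$, etc.\ and re-run a barrier argument for each derivative, and the compatibility conditions at the four corners are needed exactly to keep every term in $C^{m+2,\kappa}(\bar\Omega)$ so that these differentiations are legitimate up to the boundary, and to confirm that the correctors genuinely patch the boundary data at all four corners. A fully rigorous treatment along these lines is given in the references cited above.
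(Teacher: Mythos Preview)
Your sketch follows the standard asymptotic-expansion-plus-barrier-function route (formal expansion for the smooth part, stretched-variable correctors at the outflow edges, product corrector at the corner, remainder controlled by the comparison principle), and this is indeed the method behind the cited results. Note, however, that the paper does not supply its own proof of this proposition: it simply refers to \cite[p.244, Theorem 7.17]{Linss10} and \cite[p.253, Theorem 1.25]{Roos2008} for the case $m=1$ and asserts that the same arguments extend to larger~$m$ under additional regularity and corner compatibility conditions. So there is no ``paper's proof'' to compare against beyond these citations; your outline is consistent with the arguments in those references, and your closing remark that a rigorous treatment is in the literature is exactly the stance the paper itself takes.
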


This proposition is proved in \cite[p.244, Theorem 7.17]{Linss10}
	and \cite[p.253, Theorem 1.25]{Roos2008} for the case $m=1$.
	Similar arguments will work for larger values of $m$  
	provided  the data in~\eqref{cd:spp:2d} has sufficient regularity and satisfies suitable 
	compatibility conditions at the corners of~$\Omega$.
	Like  \cite[Section 2.2]{Zhu:2dMC} we shall need $m=k$ in Proposition~\ref{proposition:reg:2d}, where $k$ is the degree of the piecewise polynomials in our finite element space.

\subsection{Layer-adapted meshes}
\label{subsec:layer:adapted:meshes}

We shall use layer-adapted meshes \cite{Cheng2020,Linss10,Roos2008} that are refined near the sides $x=1$ and $y=1$ of~$\Omega$ but are uniform otherwise. These meshes are tensor products of one-dimensional meshes. 

Let $\varphi:[0, 1/2]\to [0,\infty)$ be a \emph{mesh-generating function} satisfying
$\varphi(0)=0, \varphi^{\prime}>0$ and $\varphi^{\prime\prime}\geq0$.
The point at which each mesh switches from uniform to nonuniform will be defined in~\eqref{layer-adapted:mesh} via the \emph{mesh transition parameter}
\begin{equation}  \label{tau}
\tau := \min\left\{\frac12, 
\frac{\sigma\varepsilon}{\alpha} \varphi\left(\frac12\right)\right\},
\end{equation}
where $\sigma>0$ is a user-chosen parameter whose value affects our error estimates; 
in the supercloseness analysis
below it can be seen that~$\sigma$ needs to be sufficiently large. 
In~\eqref{tau} we have assumed that $\alpha_1=\alpha_2=:\alpha$ for notational simplicity; the case $\alpha_1\ne \alpha_2$ does not introduce any additional analytical difficulties but would complicate our notation. 

Let $N\geq 4$ be an even positive integer. Each of our meshes will use $N+1$ points in each coordinate direction.

During the rest of the paper, we make the following two assumptions to simplify some details of the analysis.
\begin{assumption}\label{ass:1}
\mbox{ }
\begin{itemize}
\item[(i)]
Assume in~\eqref{tau}  that 
$\tau =(\sigma\varepsilon/\alpha)\varphi\big(1/2\big)$.
\item[(ii)]
Assume that $\sq\leq N^{-1}$.
\end{itemize}
\end{assumption}
If Assumption~\ref{ass:1}(i) is not satisfied, then the problem \eqref{cd:spp:2d} is not singularly perturbed and can be analysed in a classical diffusion-dominated framework.

Assumption~\ref{ass:1}(ii) is assumed by most authors 
who study numerical methods for solving~\eqref{cd:spp:2d}
--- e.g., in \cite{Roos2003,Zarin2014,Zhu:2018} it is used in the analyses of DG methods.
In Remark~\ref{comments:ep:N} we discuss the effect on our analysis of removing Assumption~\ref{ass:1}(ii).

Define the mesh points $(x_i,y_j)$ for $i,j=0,1,\dots, N$ by
\begin{equation}\label{layer-adapted:mesh}
x_i=y_i=
\begin{dcases}
2(1-\tau)i/N   &\text{for } i=0,1,...,N/2,
\\
	1-\frac{\sigma\varepsilon}{\alpha}\varphi\left(\frac{N-i}{N}\right)
&\text{for } i=N/2+1,N/2+2,...,N.
\end{dcases}
\end{equation}
Associated with each $\varphi$ is the \emph{mesh-characterizing function} $\psi:=e^{-\varphi}$,
which will play an important role in our convergence analysis. 

As in \cite{Cheng2020,Linss10,Roos2008} we consider three types of layer-adapted mesh:
the Shishkin mesh (S-mesh), the Bakhvalov-Shishkin mesh (BS-mesh) and the Bakhvalov-type mesh (B-mesh).
Table~\ref{table:functions} lists the main properties of $\varphi$ and $\psi$ for these meshes.

\begin{table}[ht]
	\caption{Three layer-adapted meshes.}
	\label{table:functions}
	\normalsize
	\centering
	\begin{tabular}{ccccc}
		\toprule
		& S-mesh
		& BS-mesh
		& B-mesh
		\\
		\midrule
		$\varphi(t)$             & $2t\ln N$    & $-\ln\big[1-2(1- N^{-1})t\big]$  &$-\ln\big[1-2(1-\varepsilon)t\big]$   \\
		$\varphi(1/2)$           & $\ln N$      & $\ln N$            &$\ln (1/\varepsilon)$    \\
		$\min\varphi^{\prime}$    & $2\ln N$     & $2$                & $2$ \\
		$\max \varphi^{\prime}$ & $2\ln N$     & $2N$               & $2\varepsilon^{-1}$\\
		$\psi(t)$                & $N^{-2t}$    & $1-2(1-N^{-1})t$   &$1-2(1-\varepsilon)t$ \\
		$\psi(1/2)$              & $N^{-1}$     & $ N^{-1}$          & $\varepsilon$    \\
		$\max|\psi^{\prime}|$    & $2\ln N$     & $2$                & $2$
		\\
		\bottomrule
	\end{tabular}
\end{table}

Finally,  by drawing axiparallel lines through the mesh points $(x_i,y_j)$,
we construct the  layer-adapted mesh:
set $\Omega_N:=\{K_{ij}\}_{i, j=1,\dots,N}$,
where each rectangular mesh element 
$K_{ij} :=I_i\times J_j :=(x_{i-1},x_i)\times (y_{j-1},y_j)$.
Figure \ref{fig:division} displays these meshes for 
$\sigma = 4,\varepsilon = 10^{-2},\alpha=1$ 
and $N=8$ in \eqref{tau} and~\eqref{layer-adapted:mesh};
they are uniform and coarse on $\Omega_{11}:=(0,1-\tau)\times(0,1-\tau)$, 
but are refined perpendicularly to the sides $x=1$ and $y=1$ in the regions $\Omega_x:=\Omega_{21}\cup \Omega_{22}$
and $\Omega_y:=\Omega_{12}\cup \Omega_{22}$ respectively.

We set $h_i = x_i-x_{i-1} = y_i-y_{i-1}$ for $i=1,2,\dots, N$. For all three types of mesh one has $N^{-1}\le h_i\le 2N^{-1}$ for $0< i\le N/2$, and when $N/2<i\le N$, for some constant~$C$ one has
\begin{equation}
\label{mesh:size:layer}
C\varepsilon N^{-1}\min\varphi^{\prime}
\leq h_i \leq 
C\varepsilon N^{-1}\max\varphi^{\prime}.
\end{equation}
When $N/2<i\le N$, the bound \eqref{mesh:size:layer} yields  $h_i \le C\varepsilon N^{-1}\ln N$ for the S-mesh,
$h_i \le C\varepsilon$ for the BS-mesh and $h_i \le C N^{-1}$ for the B-mesh. 

For all three of our meshes, Assumption~\ref{ass:1}(ii) implies that $\psi(1/2)\leq N^{-1}$ and $h_i\leq CN^{-1}$ for $i=1,2,\dots,N$; 
these properties are used in our analysis.

\begin{figure}[h]
	\begin{minipage}{0.49\linewidth}
		\vspace{3pt}
		\centerline{\includegraphics[width=1.3\textwidth]{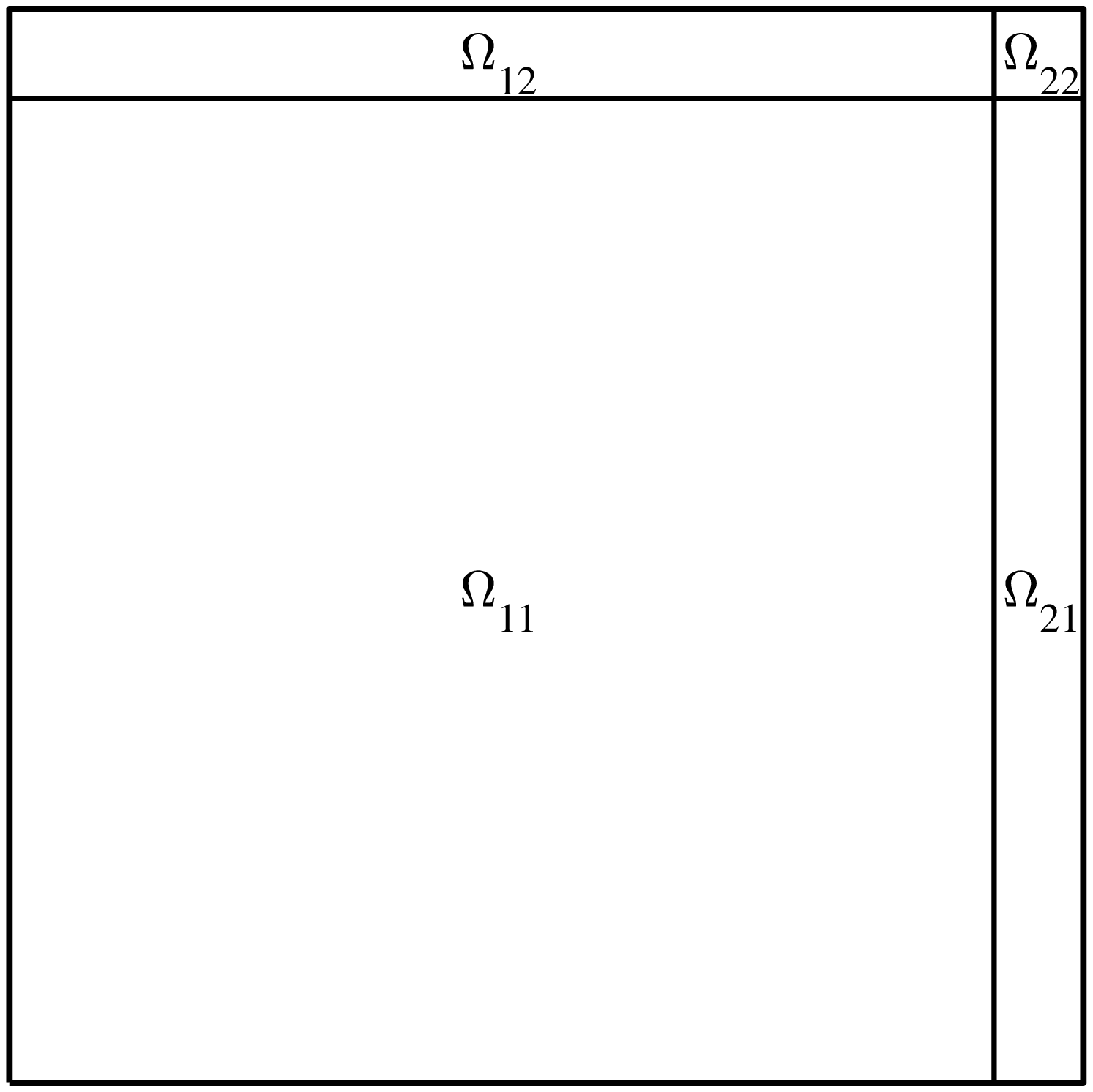}}
		\centerline{Subregions of $\Omega$}
		\vspace{3pt}
		\centerline{\includegraphics[width=1.3\textwidth]{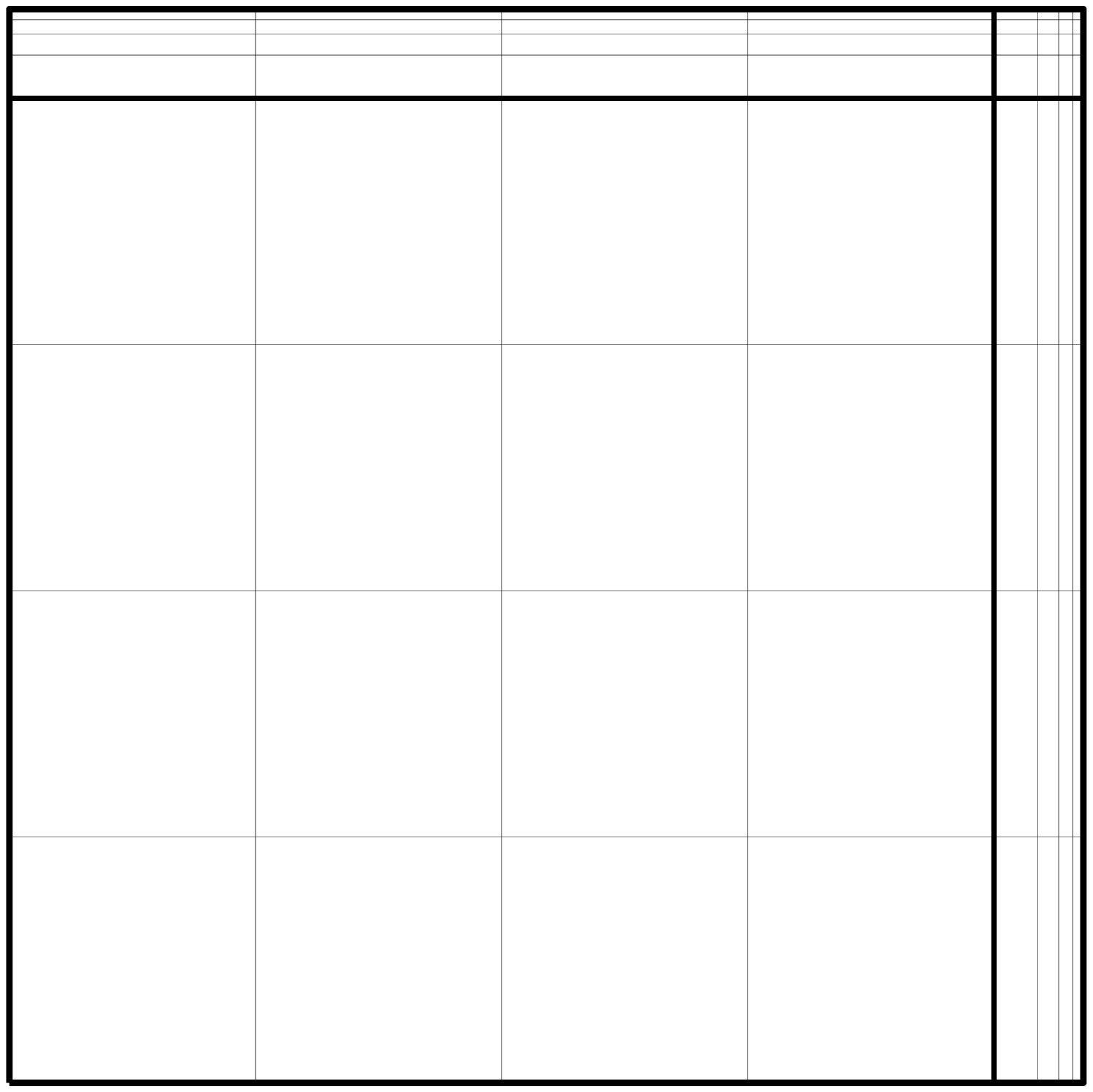}}
		\centerline{BS-mesh}
	\end{minipage}
    \begin{minipage}{0.49\linewidth}
		\vspace{3pt}
		\centerline{\includegraphics[width=1.3\textwidth]{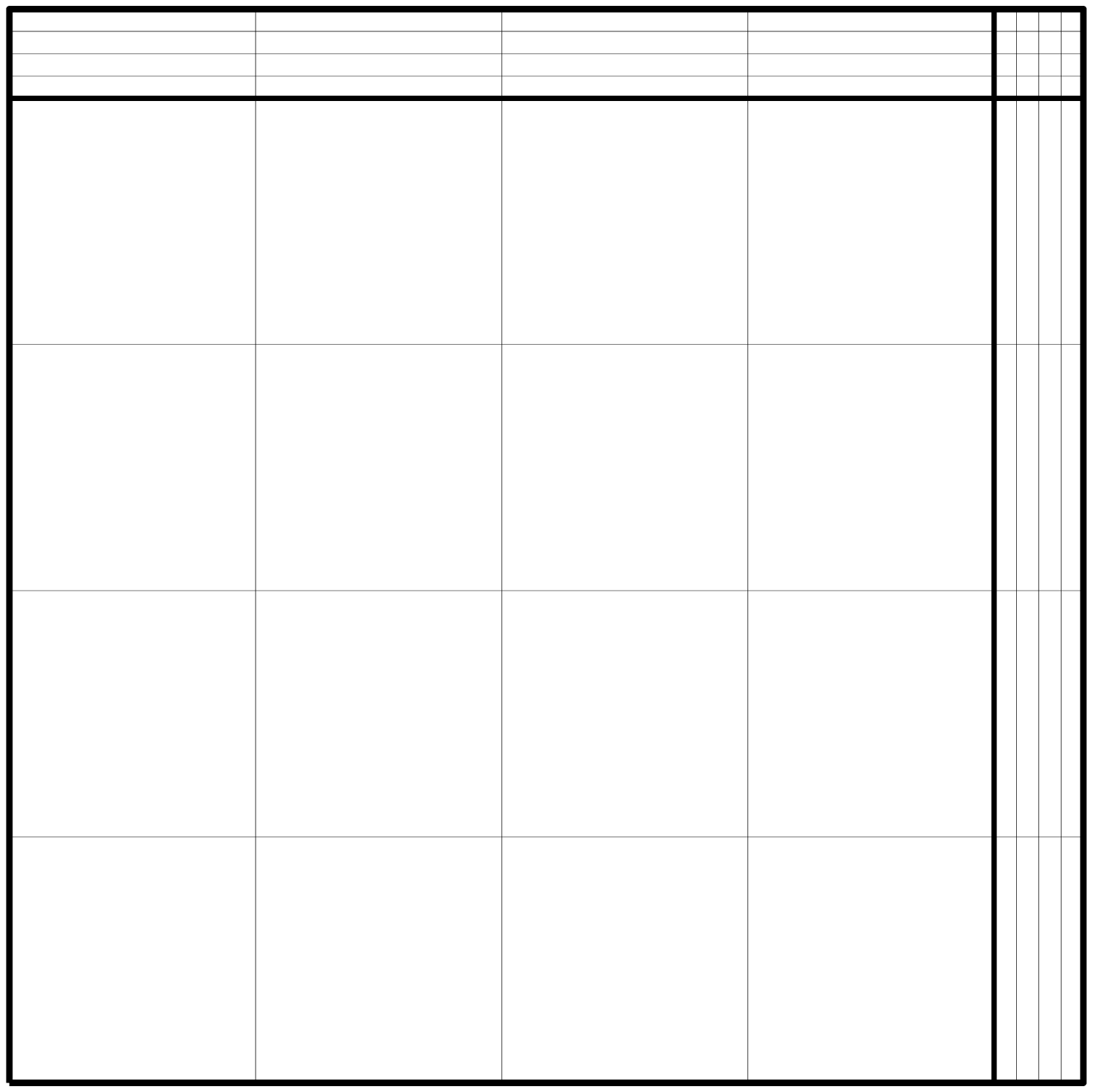}}
		\centerline{S-mesh}
		\vspace{3pt}
		\centerline{\includegraphics[width=1.3\textwidth]{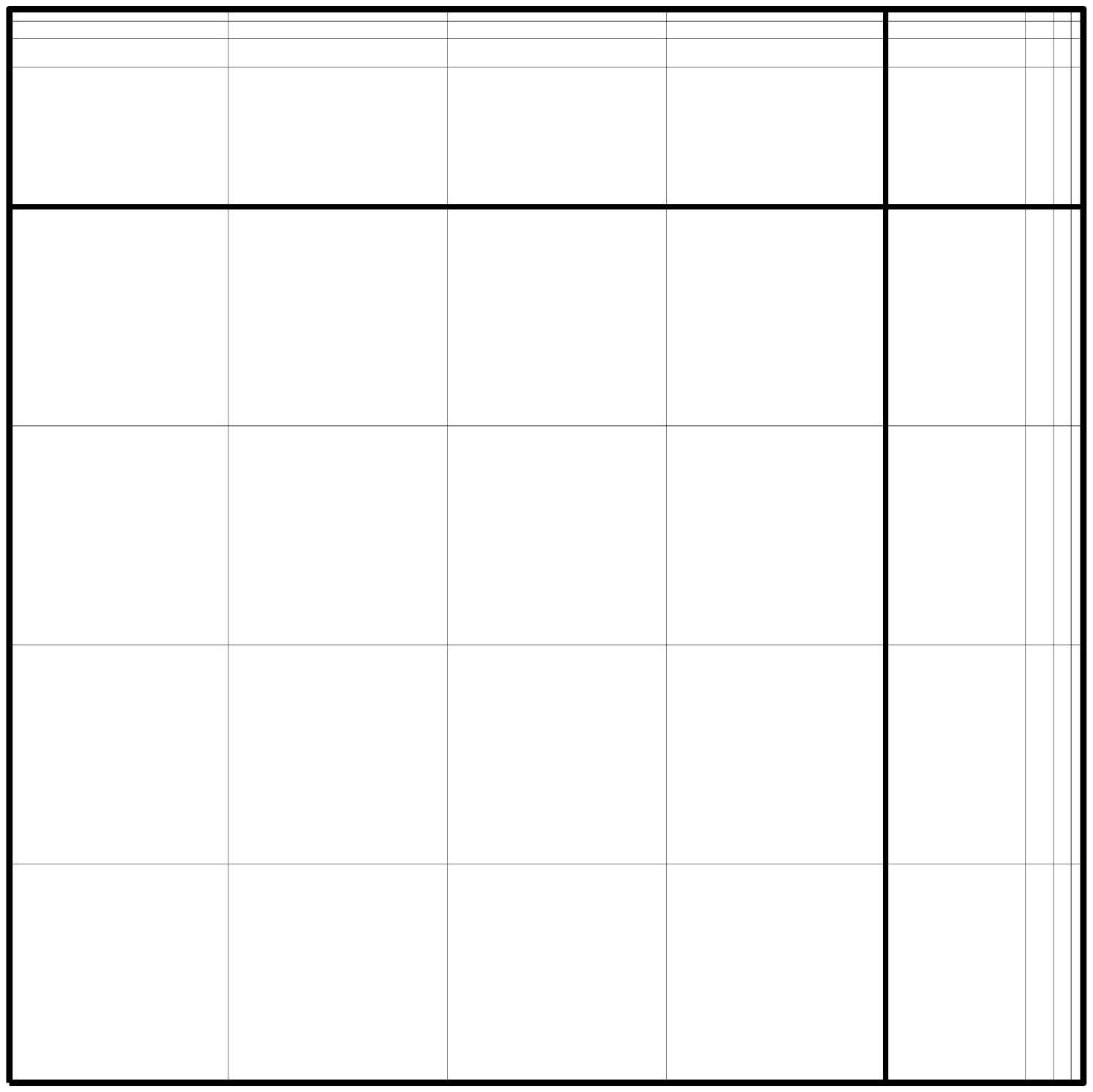}}
		\centerline{B-mesh}
	\end{minipage}
	\caption{Domain division and three layer-adapted meshes with $N=8$.}
	\label{fig:division}
\end{figure}

\subsection{The local discontinuous Galerkin (LDG) method}\label{sec:LDG}

Let $k$ be a fixed positive integer. On any 1-dimensional interval~$I$, let  $\mathcal{P}^{k}(I)$ denote the space of polynomials of degree at most~$k$ defined on~$I$.  For each mesh element $K =  I_i \times J_j$, set $\mathcal{Q}^k(K) := \mathcal{P}^{k}(I_i)\otimes \mathcal{P}^{k}(J_j)$.
Then define the discontinuous finite element space  
\[
\spc= \left\{v\in L^2 (\Omega)\colon
v|_{K} \in \mathcal{Q}^{k} (K),  K\in\Omega_N\right\}.
\]
Note that functions in $\spc$ are allowed to 
be discontinuous across element interfaces.
For any $v\in \spc$ and $y\in J_j$, $j=1,2,\dots,N$,
we use $v^\pm_{i,y}=\lim_{x\to x_{i}^\pm}v(x,y)$
to express the traces on element edges.
The jumps on the vertical edges are denoted by
$\jump{v}_{i,y}:= v^{+}_{i,y}-v^{-}_{i,y}$
for $i=1,2,\dots,N-1$, $\jump{v}_{0,y}:=v^{+}_{0,y}$
and $\jump{v}_{N,y}:=-v^{-}_{N,y}$.
In a similar fashion, we can define the jumps
$\jump{v}_{x,j}$, $j=0,1,\dots,N$.

To define the LDG method, rewrite \eqref{cd:spp:2d} as an  equivalent first-order system:
\[
-p_x-q_y+\af u_x+\as u_y +bu=f,\quad p=\sq u_x,\quad q=\sq u_y
\]
with the homogeneous boundary condition of~\eqref{spp:bc}.
Then apply the original DG discretization \cite{Reed1973} to this system. 
In the definition of the numerical flux, 
we employ a purely upwind flux \cite{Cockburn:Shu:2001} 
for the convection term and a purely alternating numerical flux \cite{Cockburn2001} 
for the diffusion term.
The final compact form of the LDG method reads as follows \cite{Cheng2020}:\\
\indent Find $\wN=(\uN,\pN,\qN)\in \spc^3 := \spc\times\spc\times\spc$ 
($U$ approximates~$u$, while $\pN$ and $\qN$ approximate $p$ and $q$  respectively)
such that
\begin{equation}\label{compact:form:2d}
	\Bln{\wN}{\vph}=\dual{f}{\vphu}
	\quad \forall \vph=(\vphu,\vphp,\vphq)\in \spc^3,
\end{equation}
where 
\begin{align}
\label{B:def:2d}
B(\wN;\vph) &:=
\mathcal{T}_1(\wN;\vph)+\mathcal{T}_2(\wN;\vph)
+\mathcal{T}_3(\wN;\vph)+\mathcal{T}_4(\uN;\vphu),
\end{align}
with
\begin{align}
\mathcal{T}_1(\wN;\vph)&=
\varepsilon^{-1}[\dual{\pN}{\vphp}+\dual{\qN}{\vphq}]
+\dual{(b- \nabla\cdot \textbf{a})\uN}{\vphu},
\nonumber\\
\mathcal{T}_2(\wN;\vph)&=
\dual{ \uN}{\vphp_x}
+\sum_{j=1}^{N}\sum_{i=1}^{N-1}\dual{\uN^{-}_{i,y}}{\jump{\vphp}_{i,y}}_{J_j}
+\dual{ \uN}{\vphq_y}
+\sum_{i=1}^{N}\sum_{j=1}^{N-1}\dual{\uN^{-}_{x,j}}{\jump{\vphq}_{x,j}}_{I_i},
\nonumber\\
\mathcal{T}_3(\wN;\vph)&=
\dual{\pN}{\vphu_x}
+\sum_{j=1}^{N}\Big[
\sum_{i=0}^{N-1}\dual{\pN^{+}_{i,y}}{\jump{\vphu}_{i,y}}_{J_j}
-\dual{\pN^{-}_{N,y}}{\vphu^{-}_{N,y}}_{J_j}
\Big]
\nonumber\\
&\quad+\dual{\qN}{\vphu_y}
+\sum_{i=1}^{N}\Big[
\sum_{j=0}^{N-1}\dual{\qN^{+}_{x,j}}{\jump{\vphu}_{x,j}}_{I_i}
-\dual{\qN^{-}_{x,N}}{\vphu^{-}_{x,N}}_{I_i}
\Big],
\nonumber\\
\mathcal{T}_4(\uN;\vphu)&=
-\dual{\af\uN}{\vphu_x}
-\sum_{j=1}^{N}\Big[\sum_{i=1}^{N}
\dual{(\af)_{i,y}\uN^{-}_{i,y}}{\jump{\vphu}_{i,y}}_{J_j}
-\dual{\lambda_{1}\uN^{-}_{N,y}}{\vphu^{-}_{N,y}}_{J_j}
\Big]
\nonumber\\
&\quad-\dual{\as\uN}{\vphu_y}
-\sum_{i=1}^{N}\Big[\sum_{j=1}^{N}
\dual{(\as)_{x,j}\uN^{-}_{x,j}}{\jump{\vphu}_{x,j}}_{I_i}
-\dual{\lambda_{2}\uN^{-}_{x,N}}{\vphu^{-}_{x,N}}_{I_i}
\Big].
\nonumber
\end{align}
The penalty parameters $\lambda_{i}$ in the LDG method
are sometimes chosen so as to improve
the stability and accuracy of the numerical scheme \cite{Castillo2002,Cockburn:Dong:2007},
but in our paper we only require $0\leq \lambda_{i}\leq C$ as in~\cite{Zhu:2dMC}.
We shall take $\lambda_1 = \lambda_2 =0$ in the numerical experiments of Section~\ref{sec:experiments}.

Define an energy norm $\enorm{\cdot}$ on $\spc^3$  
by $\enorm{\bm{V}}^2=B(\bm{V};\bm{V})$ for each $\bm{V} = (V_u,V_p,V_q)\in \spc^3$; that is, 
\begin{align*}
\enorm{\bm{V}}^2
&=\lnorm{\bm{V}}^2
+\sum_{j=1}^{N}
\left[\sum_{i=0}^{N-1}\frac12 \dual{(\af)_{i,y}}{\jump{V_u}^2_{i,y}}_{J_j}
+\dual{\frac12(\af)_{N,y}+\lambda_{1}}{\jump{V_u}^2_{N,y}}_{J_j}
\right]
\\
&\hspace{1.5cm}
+\sum_{i=1}^{N}
\left[\sum_{j=0}^{N-1}\frac12 \dual{(\as)_{x,j}}{\jump{V_u}^2_{x,j}}_{I_i}
+\dual{\frac12(\as)_{x,N}+\lambda_{2}}{\jump{V_u}^2_{x,N}}_{I_i}
\right],
\\
\text{where } \lnorm{\bm{V}}^2
&:=
\varepsilon^{-1}\norm{V_p}^2+\varepsilon^{-1}\norm{V_q}^2 
+ \norm{ \left(b- \frac12 \nabla\cdot \textbf{a} \right)^{1/2}V_u}^2.
\end{align*}
The linear system of equations \eqref{compact:form:2d} has a unique solution  $\wN$ because the associated homogeneous problem (i.e., with $f=0$) has $\enorm{\wN} =0$ and hence $\wN = (0,0,0)$.

\section{ Preliminary results}
\label{sec:prelim}

This section contains several technical results that will be used in the proof of Theorem~\ref{thm:superconvergent}, our supercloseness result.

In Section~\ref{subsec:eta} we will define a local Gauss-Radau projector $\bm\Pi: \left(C(\bar\Omega)\right)^3\to  \spc^3$.
Set $\bm w = (u,p,q)$ and $\bm\Pi \bm w=(\prou u,\prop p,\proq q)$. Then the error in the LDG solution is 
$\bm e:=(e_u,e_p,e_q) :=(u-\uN,p-\pN,q-\qN)$,  which one can also write as
\[
\bm e = \bm w - \bm W = (\bm w-\bm\Pi \bm w)-(\wN-\bm\Pi \bm w)  = \bm \eta-\bm \xi,
\]
where we define 
\begin{equation}\label{error:decomposition}
\begin{split}
\bm \eta  &= (\eta_u,\eta_p,\eta_q)
= (u-\prou u,	p-\prop p,		q-\proq q),
\\
\bm \xi &= (\xi_u,\xi_p,\xi_q)
= (\uN-\prou u,	\pN-\prop p, \qN-\proq q)
\in \sobv^3.
\end{split}
\end{equation}

Section~\ref{subsec:eta} is devoted to 
an estimate of the error $\bm \eta$ in the projection of the exact solution, while 
in Section~\ref{subsec:xi} we shall bound  $\bm \xi$.

For the rest of our paper we make the following assumptions.

\begin{assumption}\label{ass:2}
	\mbox{ }
	\begin{itemize}
		\item[(i)]
		Assume that Proposition~\ref{proposition:reg:2d} is valid for $m=k$.
		\item[(ii)]
		Assume that $\sigma\geq k+2$ in \eqref{tau}.
	\end{itemize}
\end{assumption}

\subsection{The projection error $\bm\eta$}\label{subsec:eta}
Let $K_{ij}=I_i\times J_j=(x_{i-1},x_i)\times (y_{j-1},y_j)$ be any element of our mesh.
For any $z\in C(\bar{\Omega})$,  
the local Gauss-Radau projection $\Pi^{-}z \in \spc$ is defined
by the following conditions:
	\begin{align*}
	\int_{K_{ij}}(\Pi^{-}z) \vphu \,\textrm{d}x\,\textrm{d}y
	&= \int_{K_{ij}}z \vphu \,\textrm{d}x\,\textrm{d}y
	\quad \forall \vphu\in \mathcal{Q}^{k-1}(K_{ij}),
	\\
	\int_{J_j}(\Pi^{-} z)_{i,y}^{-}\vphu\,\textrm{d}y
	&=\int_{J_j} z_{i,y}^{-}\vphu\,\textrm{d}y
	\qquad \forall \vphu\in \mathcal{P}^{k-1}(J_j),
	\\
	\int_{I_i}(\Pi^{-} z)_{x,j}^{-}\vphu\,\textrm{d}x
	&=\int_{I_i} z_{x,j}^{-}\vphu\,\textrm{d}x
	\qquad \forall \vphu\in \mathcal{P}^{k-1}(I_i),
	\\
	(\Pi^{-} z)(x_{i}^{-},y_{j}^{-}) &=	 z(x_{i}^{-},y_{j}^{-});
   \end{align*}
where we used the edge traces $z_{i,y}^{-}$ and $z_{x,j}^{-}$
from Section~\ref{sec:LDG}.
To deal with the auxiliary variables $p$ and $q$, we define
two another projections.
For any $z\in C(\bar{\Omega})$,  $\Pi_x^{+}z \in \spc$ satisfies
\begin{align*}
\int_{K_{ij}}(\Pi_x^{+} z) \vphu
\,\textrm{d}x\,\textrm{d}y
&=
\int_{K_{ij}}z \vphu \,\textrm{d}x\,\textrm{d}y
\quad \forall \vphu\in \mathcal{P}^{k-1}(I_i)\otimes \mathcal{P}^k(J_j),
\\
\int_{J_j}(\Pi_x^{+} z)_{i,y}^{+}\vphu\,\textrm{d}y
&=
\int_{J_j}  z_{i,y}^{+}\vphu\,\textrm{d}y
\qquad \forall \vphu\in \mathcal{P}^k(J_j).
\end{align*}
 Analogously, the projection  $\Pi_y^{+}z \in \spc$ satisfies
 \begin{align*}
 \int_{K_{ij}}(\Pi_y^{+}  z) \vphu\,\textrm{d}x\,\textrm{d}y
 &=
 \int_{K_{ij}}z \vphu \,\textrm{d}x\,\textrm{d}y
 \quad \forall \vphu\in \mathcal{P}^{k}(I_i)\otimes \mathcal{P}^{k-1}(J_j),
 \\
 \int_{I_i}(\Pi_y^{+} z)^{+}_{x,j}\vphu\,\textrm{d}x
 &=
 \int_{I_i}  z^{+}_{x,j}\vphu\,\textrm{d}x
 \qquad \forall \vphu\in \mathcal{P}^k(I_i).
 \end{align*}

These conditions define $\Pi^{-}z, \Pi_x^{+}z, \Pi_y^{+}z  \in \spc$ uniquely \cite{Castillo2002,Cockburn2001}.
Let  $\Pi\in\{\Pi^{-},\Pi_x^{+},\Pi_y^{+}\}$.
Similarly to~\cite[Lemma 5]{Cheng2021:Calcolo},
one obtains the following stability properties: 
	\begin{subequations}\label{2GR:stb:app}
		\begin{align}
		\label{GR:stb}
		\norm{\Pi z}_{L^\infty(K_{ij})}
		&\leq  C \norm{z}_{L^\infty(K_{ij})},
		\\
		\norm{\Pi^+_x z}_{K_{ij}}&\leq  C
		\Big[
		\norm{z}_{K_{ij}} 
		+h^{1/2}_{i}\norm{z^+_{i-1,y}}_{J_j}
		\Big],
		\\
		\norm{\Pi^+_y z}_{K_{ij}}&\leq  C
		\Big[
		\norm{z}_{K_{ij}} + h^{1/2}_{j}\norm{z^+_{x,j-1}}_{I_i}
		\Big],
		\end{align}
	\end{subequations}
and the approximation properties 
(see, e.g., \cite[Lemma 3]{Cheng2021:Calcolo} and \cite[Lemma 4.3]{Zhu:2dMC})
\begin{subequations}\label{2GR:stb:app:2}
	\begin{align}
	\label{GR:app}
	\norm{z-\Pi z}_{L^\infty(K_{ij})}
	&\leq 
	C \left[h_{i}^{k+1}\norm{\partial_x^{k+1}z}_{L^\infty(K_{ij})}
	+h_{j}^{k+1}\norm{\partial_y^{k+1}z}_{L^\infty(K_{ij})}
	\right],
	\\
	\label{GR:L^2:app}
	\norm{z-\Pi z}_{K_{ij}}
	&\leq 
	C \left[h_{i}^{k+1}\norm{\partial_x^{k+1}z}_{K_{ij}}
	+h_{j}^{k+1}\norm{\partial_y^{k+1}z}_{K_{ij}}
	\right].
	\end{align}
\end{subequations}

Define
\[
\mu=
	\begin{cases}
	\varepsilon &\text{for the S-mesh}, \\
	\varepsilon\ln N &\text{for the BS-mesh},\\
	\varepsilon\ln(1/\varepsilon) &\text{for the B-mesh}.
	\end{cases}
\]

Recall the decomposition of~$u$ in Proposition~\ref{proposition:reg:2d}. Analogously decompose 
$\eta_u = \eta_S +\eta_{E_{21}}+\eta_{E_{12}}+ \eta_{E_{22}}$, where for each component~$z$ of~$u$ one sets
$\eta_z:= z - \Pi^-z$. Then one has the following bounds on the projection error~$\bm\eta$.

\begin{lemma}\label{lemma:GR}
There exists a constant $C$ such that
	\begin{subequations}\label{2dGR:property}
		\begin{align}
		\label{etau:2d}
        \norm{\eta_u}_{L^{\infty}(\Omega)}
        &\leq C(N^{-1}\max|\psi^{\prime}|)^{k+1},
		\\
		\label{etau:layer:L2}
		\norm{\eta_u}_{\Omega_{x}\cup \Omega_{y}}
		&\leq 
		C\mu^{1/2}(N^{-1}\max|\psi^{\prime}|)^{k+1},
		\\
		\label{etau:2d:1}
		\sum_{j=1}^{N}\norm{(\eta_u)_{i,y}^{-}}^2_{J_j}
		+\sum_{i=1}^{N}\norm{(\eta_u)_{x,j}^{-}}^2_{I_i}
		&\leq  CN^{-2(k+1)}(\max|\psi^{\prime}|)^{2k+1},
		\text{ for } i,j=1,...,N,
		\\
		\label{etau:2d:2}
		\sum_{i=1}^{N}\sum_{j=1}^{N}
		\Big[
		\norm{(\eta_u)_{i,y}^{-}}_{J_j}^2
		+\norm{(\eta_u)_{x,j}^{-}}_{I_i}^2\Big]
		&\leq  C(N^{-1}\max|\psi^{\prime}|)^{2k+1},
		\\
		\label{etau:2d:3}
		\sum_{j=1}^{N}\sum_{i=0}^{N}
		\dual{1}{\jump{\eta_u}^2_{i,y}}_{J_j}
		+\sum_{i=1}^{N}\sum_{j=0}^{N}
		\dual{1}{\jump{\eta_u}^2_{x,j}}_{I_i}
		&\leq  C(N^{-1}\max|\psi^{\prime}|)^{2k+1},
		\\
		\label{etap:2d:1}
		\varepsilon^{-\frac12}\norm{\eta_q}
		+\varepsilon^{-\frac12}\norm{\eta_p}
		&\leq  C(N^{-1}\max|\psi^{\prime}|)^{k+1},
		\\
		\label{etap:2d:2}
		\sum_{i=1}^{N}\norm{(\eta_q)_{x,N}^{-}}^2_{I_i}
		+\sum_{j=1}^{N}\norm{(\eta_p)_{N,y}^{-}}^2_{J_j}
		&\leq  C(N^{-1}\max|\psi^{\prime}|)^{2(k+1)}.
		\end{align}
	\end{subequations}
\end{lemma}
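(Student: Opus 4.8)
The plan is to prove each bound in \eqref{2dGR:property} by exploiting the tensor-product structure of both the mesh and the projector $\Pi^-$, together with the solution decomposition $u = S + E_{21} + E_{12} + E_{22}$ of Proposition~\ref{proposition:reg:2d} and the derivative bounds \eqref{reg:u:2d}. The key preliminary observation is that on each element $K_{ij}$ the local Gauss--Radau projector $\Pi^-$ factors as a product of one-dimensional Gauss--Radau projectors $\pi_x^- \otimes \pi_y^-$ (the defining conditions separate in $x$ and $y$), so the two-dimensional approximation and stability estimates \eqref{2GR:stb:app}--\eqref{2GR:stb:app:2} can be refined componentwise: for each term $z \in \{S, E_{21}, E_{12}, E_{22}\}$ one writes $z - \Pi^- z = (z - \pi_x^- z) + (\pi_x^-z - \pi_x^-\pi_y^- z) = (z - \pi_x^- z) + \pi_x^-(z - \pi_y^- z)$ and estimates each piece using one-dimensional projection error bounds in the relevant coordinate, keeping the exponential weight from the other coordinate intact. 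This lets us separate the mesh sizes: in the layer region the mesh is fine ($h_i \le C\varepsilon N^{-1}\max\varphi'$) so the exponentially small factor $e^{-\alpha(1-x)/\varepsilon}$ is bounded by~$1$ there but the factor $h_i^{k+1}\varepsilon^{-(k+1)}$ is controlled by $(N^{-1}\max\varphi')^{k+1}$; in the coarse region the mesh is $O(N^{-1})$ but the layer function is exponentially small — and here Assumption~\ref{ass:2}(ii), $\sigma \ge k+2$, is exactly what forces $e^{-\alpha\tau/\varepsilon} = \psi(1/2)^\sigma \le N^{-\sigma} \le N^{-(k+2)}$, absorbing any loss.

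First I would establish \eqref{etau:2d}: for the smooth part $\eta_S$, use \eqref{GR:app} with $|\partial^{k+1}S| \le C$ and $h_i \le CN^{-1}$, noting $N^{-1} \le N^{-1}\max|\psi'|$ up to a constant ($\max|\psi'| \ge 1$ for all three meshes by Table~\ref{table:functions}, since $\ln N \ge 1$); for the layer parts, split $\Omega$ into the fine and coarse subregions as above. Next, \eqref{etau:layer:L2} is the $L^2$ bound over the layer region only: here I would integrate the pointwise bound \eqref{etau:2d} over $\Omega_x \cup \Omega_y$, using that $|\Omega_x \cup \Omega_y| \le C\tau = C\sigma\varepsilon\varphi(1/2)/\alpha$, which by Table~\ref{table:functions} is exactly $C\mu$ for each of the three meshes; taking the square root gives the $\mu^{1/2}$ prefactor. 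The edge-trace estimates \eqref{etau:2d:1}--\eqref{etau:2d:3} follow from a standard multiplicative trace inequality $\|v\|_{\partial K}^2 \le C(h^{-1}\|v\|_K^2 + h\|\partial v\|_K^2)$ applied to $\eta_u$ on each element, combined with the interior $L^2$ estimate \eqref{GR:L^2:app}; the loss of one power of $N$ (giving exponent $2k+1$ rather than $2k+2$) comes from this $h^{-1}$ factor acting on the coarse-region contribution where $h \sim N^{-1}$, while the layer-region contribution is better by the $\mu$-factor. For \eqref{etau:2d:3} one additionally uses that the jump $\jump{\eta_u}$ is bounded by a sum of one-sided traces of $\eta_u$ (since $u$ itself is continuous), so it reduces to \eqref{etau:2d:2}. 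Finally, \eqref{etap:2d:1}--\eqref{etap:2d:2} concern the auxiliary variables $p = \varepsilon u_x$, $q = \varepsilon u_y$; since $p_x$-derivatives scale like $\varepsilon \cdot \varepsilon^{-(i+1)} = \varepsilon^{-i}$, the extra $\varepsilon$ exactly cancels the $\varepsilon^{-1/2}$ prefactor after the projection-error analysis, and \eqref{etap:2d:2} gains a full power of $N$ over \eqref{etau:2d:1} because it involves only the single outflow edge $x = 1$ (resp. $y=1$) inside the layer, where the $\mu$-smallness is available without the $h^{-1}$ trace loss being applied to a coarse element.

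The main obstacle, as usual in layer-adapted-mesh analysis, is bookkeeping the interaction between the three solution components and the subregion decomposition, and in particular making sure the half-order gains ($\mu^{1/2}$ in \eqref{etau:layer:L2}, the sharp exponent $2k+1$ in \eqref{etau:2d:1}--\eqref{etau:2d:3}) are genuinely there rather than being lost to crude global bounds. Concretely, the delicate point is the coarse-region estimate of the cross term $E_{22}$ and of $E_{21}$ on $\Omega_y$ (and symmetrically $E_{12}$ on $\Omega_x$): on the coarse part of one coordinate the layer function in that coordinate is not small, so one must use the *other* coordinate's exponential decay, which requires the tensor-factored form of $\Pi^-$ and a careful choice of which one-dimensional projection to expand. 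I expect this is where Assumption~\ref{ass:2}(ii) enters most essentially and where one must be careful not to pick up an uncompensated $\varepsilon^{-1/2}$ or logarithmic factor beyond those already absorbed into $\max|\psi'|$ and~$\mu$. The rest is a finite set of one-dimensional estimates of the type already available in \cite{Cheng2020,Cheng2021:Calcolo,Zhu:2dMC}, reassembled over the two-dimensional mesh.
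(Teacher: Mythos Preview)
Your overall strategy matches the paper's: cite \cite{Cheng2020} for \eqref{etau:2d} and \eqref{etau:2d:2}--\eqref{etap:2d:2}, and supply direct arguments for \eqref{etau:layer:L2} and \eqref{etau:2d:1}. But your argument for \eqref{etau:layer:L2} has a genuine gap on the S-mesh.

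You claim that $\tau = C\sigma\varepsilon\varphi(1/2)/\alpha$ equals $C\mu$ for \emph{each} of the three meshes. This is false for the S-mesh: there $\varphi(1/2)=\ln N$ gives $\tau = C\varepsilon\ln N$, whereas $\mu=\varepsilon$ by definition. Thus the crude bound $\|\eta_u\|_{\Omega_x\cup\Omega_y}\le \sqrt{2\tau}\,\|\eta_u\|_{L^\infty}$ produces $C\sqrt{\varepsilon\ln N}\,(N^{-1}\ln N)^{k+1}$, which is a factor $(\ln N)^{1/2}$ worse than the claimed $\mu^{1/2}(N^{-1}\max|\psi'|)^{k+1}=\sqrt{\varepsilon}\,(N^{-1}\ln N)^{k+1}$. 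Your area-times-$L^\infty$ argument \emph{is} exactly what the paper does for the BS- and B-meshes (where indeed $\tau=C\mu$), but for the S-mesh the paper instead works component by component in $L^2$: it uses the $L^2$ approximation property \eqref{GR:L^2:app} on each $K_{ij}\subset\Omega_x$ together with the exponential weights \eqref{reg:u:2d}, obtaining for instance
\[
\|\eta_{E_{21}}\|_{\Omega_x}^2 \le C\sum_{K_{ij}\subset\Omega_x}(N^{-1}\ln N)^{2(k+1)}\|e^{-\alpha_1(1-x)/\varepsilon}\|_{K_{ij}}^2 \le C\varepsilon(N^{-1}\ln N)^{2(k+1)},
\]
where the gain of one full $\varepsilon$ (rather than $\varepsilon\ln N$) comes from integrating the exponential, not from the area of $\Omega_x$. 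The same mechanism handles $\eta_{E_{12}}$ on $\Omega_{22}$ and $\eta_{E_{22}}$; on $\Omega_{21}$ one uses the $L^\infty$ stability \eqref{GR:stb} and the smallness $e^{-\alpha_2\tau/\varepsilon}\le N^{-\sigma}$. So the missing idea is that on the S-mesh you must \emph{integrate} the layer exponentials in $L^2$ rather than bound them pointwise and multiply by the area.

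For \eqref{etau:2d:1} your trace-inequality plan is plausible, though the paper simply extracts the single-index bound
\[
\sum_{j=1}^{N}\|(\eta_u)_{i,y}^{-}\|_{J_j}^2 \le C\big[N^{-2(k+1)}+N^{-1}(N^{-1}\max|\psi'|)^{2k+1}\big]
\]
from inside the proof of \cite[Lemma~4.1]{Cheng2020} and observes that this is $\le CN^{-2(k+1)}(\max|\psi'|)^{2k+1}$.
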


\begin{proof}	
The inequalities \eqref{etau:2d} and \eqref{etau:2d:2}--\eqref{etap:2d:2}  are derived in  \cite[ Lemma 4.1]{Cheng2020}.  We shall now prove \eqref{etau:layer:L2} and \eqref{etau:2d:1}.

We have $\norm{\eta_u}_{\Omega_{x}\cup \Omega_{y}} \le \sqrt{2\tau}\norm{\eta_u}_{L^\infty(\Omega_{x}\cup \Omega_{y})}$ 	because the area of $\Omega_{x}\cup \Omega_{y}$ is bounded by~$2\tau$;  thus \eqref{etau:layer:L2} follows immediately from \eqref{etau:2d} 
for the BS-mesh and B-mesh.
We now prove  \eqref{etau:layer:L2}  for the S-mesh.
Use  \eqref{reg:S} and~\eqref{GR:app} to get $\norm{\eta_{S}}_{L^\infty(\Omega_{x})}\leq C N^{-(k+1)}$; 
then $\norm{\eta_{S}}_{\Omega_{x}}\leq C\ssq N^{-(k+1)}\ln^{1/2} N$ follows.
Next, the $L^{2}$-approximation property
	\eqref{GR:L^2:app} and the derivative bound \eqref{reg:E21} yield
\begin{align*}
\norm{\eta_{E_{21}}}_{\Omega_{x}}^2
&\leq  
C\sum_{j=1}^N\sum_{i=N/2+1}^N
\Big[h_i^{2(k+1)}
\norm{\partial_x^{k+1}E_{21}}^2_{K_{ij}}
+h_{j}^{2(k+1)}\norm{\partial_y^{k+1}E_{21}}^2_{K_{ij}}
\Big]
\\
&\leq 
C\sum_{j=1}^N\sum_{i=N/2+1}^N
(N^{-1}\ln N)^{2(k+1)}
\norm{e^{-\alpha_1(1-x)/\varepsilon}}^2_{K_{ij}}
\leq C\varepsilon(N^{-1}\ln N)^{2(k+1)}.
\end{align*}
Analogously, one has 
$\norm{\eta_{E_{12}}}_{\Omega_{y}}^2
\leq  C\varepsilon(N^{-1}\ln N)^{2(k+1)}$.
Now \eqref{GR:stb} and $\sigma\geq k+1$ give
\begin{align*}
\norm{\eta_{E_{12}}}_{\Omega_{x}}^2
&= \norm{\eta_{E_{12}}}_{\Omega_{22}}^2+\norm{\eta_{E_{12}}}_{\Omega_{21}}^2
\\
&\leq  \norm{\eta_{E_{12}}}_{\Omega_{y}}^2
+C\sum_{j=1}^{N/2}\sum_{i=N/2+1}^N h_ih_j
\norm{e^{-\alpha_2(1-y)/\varepsilon}}^2_{L^\infty(K_{ij})}
\\
&
\leq C\varepsilon(N^{-1}\ln N)^{2(k+1)}
	+C(\sq \ln N) e^{-\alpha_2\tau/\varepsilon}
\\
&
\leq C\varepsilon(N^{-1}\ln N)^{2(k+1)}+C\sq N^{-2(k+1)}\ln N \\
&\leq C\varepsilon(N^{-1}\ln N)^{2(k+1)}.
\end{align*}
By similar calculations one obtains
\[
\norm{\eta_{E_{22}}}_{\Omega_{22}}\leq C\sq (N^{-1}\ln N)^{k+1}
\quad\text{and}\quad
\norm{\eta_{E_{22}}}_{\Omega_{21}}\leq C\ssq N^{-(k+1)}\ln^{1/2}N.
\]
These inequalities together yield
$\norm{\eta_u}_{\Omega_{x}}\leq C\ssq(N^{-1}\ln N)^{k+1}$
for the S-mesh. 
The bound on $\norm{\eta_u}_{\Omega_{y}}$ is similar so \eqref{etau:layer:L2} is proved.

One obtains \eqref{etau:2d:1} from the estimates
\begin{align*}
\sum_{j=1}^{N}\norm{(\eta_u)_{i,y}^{-}}_{J_j}^2
&\leq
C\big[N^{-2(k+1)}+N^{-1}(N^{-1}\max|\psi^{\prime}|)^{2k+1}\big]
\ \text{ for } i=1,\dots,N
\intertext{and}
\sum_{j=1}^{N}\norm{(\eta_u)_{x,j}^{-}}_{J_j}^2
&\leq 
C\big[N^{-2(k+1)}+N^{-1}(N^{-1}\max|\psi^{\prime}|)^{2k+1}\big]
\ \text{ for } j=1,\dots,N,
\end{align*}
which appear in the proof of~\cite[Lemma 4.1]{Cheng2020}.
\end{proof}

For each element $K_{ij}\in \Omega_N$ and each $v\in \spc$,
		define the bilinear forms 
		\begin{align*}
		\mathcal{D}^1_{ij}(\eta_u,v)
		&:= \dual{\eta_u}{v_x}_{K_{ij}}
		-\dual{(\eta_u)^{-}_{i,y}}{v^{-}_{i,y}}_{J_{j}}
		+\dual{(\eta_u)^{-}_{i-1,y}}{v^{+}_{i-1,y}}_{J_{j}},
		\\
		\mathcal{D}^2_{ij}(\eta_u,v)
		&:= \dual{\eta_u}{v_y}_{K_{ij}}
		-\dual{(\eta_u)^{-}_{x,j}}{v^{-}_{x,j}}_{I_{i}}
		+\dual{(\eta_u)^{-}_{x,j-1}}{v^{+}_{x,j-1}}_{I_{i}},
		\end{align*}
		with $(\eta_u)^{-}_{0,y}=(\eta_u)^{-}_{x,0}=0$.
The next lemma presents a superapproximation result 
for these bilinear operators.

	\begin{lemma}
		\label{superapproximation:element}
There exists a constant $C$ such that for 
any function $z\in W^{k+2,\infty}(\Omega)$, all $v\in \spc$ and $1\le i,j\le N$, one has
\begin{subequations}
		\begin{align}
		\label{sup:L2}
		|\mathcal{D}^{1}_{ij}(\eta_z,v)|
		&\leq
		Ch_i^{-1}\left[h_{i}^{k+2}\norm{\partial_x^{k+2}z}_{K_{ij}}
		+h_{j}^{k+2}\norm{\partial_y^{k+2}z}_{K_{ij}}
		\right]\norm{v}_{K_{ij}}
		\\
		\label{sup:L0}
		&\leq
		C\sqrt{\frac{h_j}{h_i}}
		\left[h_{i}^{k+2}\norm{\partial_x^{k+2}z}_{L^\infty(K_{ij})}
		+h_{j}^{k+2}\norm{\partial_y^{k+2}z}_{L^\infty(K_{ij})}
		\right]\norm{v}_{K_{ij}},
		\\
		\label{stab:L0:bilinear}
		|\mathcal{D}^{1}_{ij}(\eta_z,v)|
		&\leq C\sqrt{\frac{h_j}{h_i}}
		\norm{z}_{L^\infty(K_{ij})}\norm{v}_{K_{ij}};
		\intertext{furthermore, for the solution $u$ satisfying the bounds of Proposition~\ref{proposition:reg:2d} with $m=k$,}
		\label{sup:L2:bilinear}
		\sum_{K_{ij}\in \Omega_x}
		&\Bigg(\frac{|\mathcal{D}^{1}_{ij}(\eta_u,v)|}{\norm{v}_{K_{ij}}}
		\Bigg)^2
		\leq C\varepsilon^{-1}(N^{-1}\max|\psi^{\prime}|)^{2(k+1)}.
		\end{align}
\end{subequations}
Analogous bounds hold true for $\mathcal{D}^{2}_{ij}(\eta_u,v)$.
	\end{lemma}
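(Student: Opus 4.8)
The plan is to establish the four bounds in order, exploiting the defining orthogonality of the Gauss-Radau projector $\Pi^-$ together with scaling arguments on the reference element. First I would prove~\eqref{sup:L2}. The key observation is that $\mathcal{D}^1_{ij}(\eta_z,v)$ is, after an integration by parts in $x$ on $K_{ij}$, exactly of the form $\dual{\eta_z}{v_x}_{K_{ij}} - (\text{boundary terms that telescope against }v^{\mp})$, and that this bilinear form annihilates $v\in\mathcal{Q}^k$ whenever $\eta_z$ is replaced by the projection error of a polynomial of degree $\le k$ in $x$; more precisely, the defining conditions of $\Pi^-$ (the volume moments against $\mathcal{Q}^{k-1}$, the edge moments against $\mathcal{P}^{k-1}(J_j)$, and the vertex-matching condition) are precisely what is needed so that $\mathcal{D}^1_{ij}(\eta_z, v) = \mathcal{D}^1_{ij}(z - \Pi^-_{\text{1D}, I_i}\otimes(\cdot)\,z, v)$ vanishes when $v_x$ and the relevant traces of $v$ lie in the tested polynomial spaces. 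Equivalently, one writes $v_x \in \mathcal{Q}^{k-1}$-in-$x$ after differentiation; careful bookkeeping of which moments are killed shows $\mathcal{D}^1_{ij}(\eta_z,v)$ depends on $z$ only through its deviation from its degree-$(k+1)$ Taylor-type behaviour, giving the $h^{k+2}$-order factor. A standard Bramble-Hilbert / scaling argument on $\hat K = (0,1)^2$ then converts this into $\|\partial_x^{k+2}z\|$ and $\|\partial_y^{k+2}z\|$ with the correct powers of $h_i, h_j$, and the single factor $h_i^{-1}$ in front of~\eqref{sup:L2} comes from the derivative $v_x$ (one inverse power of $h_i$) balanced against $\|v\|_{K_{ij}}$; the cross term $h_j^{k+2}\|\partial_y^{k+2}z\|$ arises because $\Pi^-$ uses only degree $k$ in $y$, so the $y$-direction approximation error in the volume integral $\dual{\eta_z}{v_x}$ contributes at order $h_j^{k+1}$ and then gains one more power because $v_x$ is orthogonal to $\mathcal{Q}^{k-1}$-in-$x$ — I would verify this last point by splitting $\eta_z = (z - \Pi^-_x z) + (\Pi^-_x z - \Pi^- z)$ or a similar tensor-product splitting. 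Then~\eqref{sup:L0} follows from~\eqref{sup:L2} by the finite-dimensional inverse inequality $\|w\|_{K_{ij}} \le C(h_ih_j)^{1/2}\|w\|_{L^\infty(K_{ij})}$ applied to the derivative terms, turning $h_i^{-1}(h_ih_j)^{1/2} = \sqrt{h_j/h_i}$; and~\eqref{stab:L0:bilinear} follows the same way but starting from the crude $L^\infty$-stability bound $|\mathcal{D}^1_{ij}(\eta_z,v)| \le C\|\eta_z\|_{L^\infty}\|v_x\|_{L^1}+\cdots \le Ch_i^{-1}\|z\|_{L^\infty(K_{ij})}\|v\|_{K_{ij}}(h_ih_j)^{1/2}$ using~\eqref{GR:stb} and inverse estimates, without any approximation gain.

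For the summed bound~\eqref{sup:L2:bilinear}, I would sum the squares of~\eqref{sup:L0} over $K_{ij}\in\Omega_x = \Omega_{21}\cup\Omega_{22}$, using the solution decomposition $u = S + E_{21} + E_{12} + E_{22}$ componentwise. On $\Omega_x$ the mesh is fine in the $x$-direction ($h_i \le C\varepsilon N^{-1}\max\varphi'$ for $i>N/2$) and coarse in $y$ on $\Omega_{21}$ but fine in $y$ on $\Omega_{22}$, so the factor $\sqrt{h_j/h_i}$ is the delicate part: on $\Omega_{21}$ one has $h_j\le CN^{-1}$ and $h_i \approx \varepsilon N^{-1}\max\varphi'$, so $h_j/h_i \approx (\varepsilon\max\varphi')^{-1}$, which is the source of the $\varepsilon^{-1}$ on the right-hand side; on $\Omega_{22}$ both $h_i$ and $h_j$ are fine and $h_j/h_i \approx 1$. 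For each layer component I would insert the derivative bounds~\eqref{reg:S}--\eqref{reg:E22}: for $E_{21}$, $\|\partial_x^{k+2}E_{21}\|_{L^\infty(K_{ij})} \le C\varepsilon^{-(k+2)}e^{-\alpha_1(1-x_i)/\varepsilon}$, and $h_i^{k+2}\varepsilon^{-(k+2)} \approx (N^{-1}\max\varphi')^{k+2}$, which together with the geometric decay of the exponential across the $x$-layer mesh (summing a geometric series in $i$) and the relation $\max\varphi' / \max|\psi'| = \max\varphi'$ for the S-mesh (where $\psi = e^{-\varphi}$, $|\psi'| = \varphi' e^{-\varphi} \le \varphi'$) — more carefully, using the mesh-specific identities in Table~\ref{table:functions} so that $(N^{-1}\max\varphi')$-type quantities are controlled by $(N^{-1}\max|\psi'|)$ up to the $\varepsilon^{-1/2}$ we are allowed — produces the claimed $\varepsilon^{-1}(N^{-1}\max|\psi'|)^{2(k+1)}$. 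The smooth part $S$ contributes $\sum \sqrt{h_j/h_i}\cdot h^{k+2}\cdot C$ with $h^{k+2}\le C N^{-(k+2)}$, and since $|\Omega_x|$ shrinks like $\tau \le C\varepsilon\varphi(1/2)$ the number-of-elements times $h_j/h_i$ stays bounded by $C\varepsilon^{-1}$; the off-layer components $E_{12}$ restricted to $\Omega_{21}$ and $E_{22}$ restricted to $\Omega_{21}$ are exponentially small (evaluated away from their own layers, bounded via~\eqref{stab:L0:bilinear} and the smallness $e^{-\alpha\tau/\varepsilon} \le N^{-\sigma} \le N^{-(k+2)}$ from Assumption~\ref{ass:2}(ii)).

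The main obstacle I anticipate is the gain of one extra power of $h$ in the cross term $h_j^{k+2}\|\partial_y^{k+2}z\|$ of~\eqref{sup:L2}: naively the $y$-approximation error of $\Pi^-$ is only $O(h_j^{k+1})$, and recovering the extra $h_j$ requires using that $v_x$ in $\dual{\eta_z}{v_x}_{K_{ij}}$ is orthogonal, in the $x$-variable, to $\mathcal{P}^{k-1}(I_i)$, so that only the degree-$(k+1)$-and-higher part of $\eta_z$ in $x$ survives — and that part, after the tensor-product splitting, carries a $y$-factor that is itself a projection error, hence $O(h_j^{k+1})$, giving $O(h_i^{k+1})\cdot O(h_j^{k+1})$ which is better than needed in $h_i$ but must be rebalanced; the correct statement is that the pure-$y$ error term picks up one power of $h_i$ from differentiating $v$ and one could also argue via $\|v_x\|_{K_{ij}} \le Ch_i^{-1}\|v\|_{K_{ij}}$. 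Getting this bookkeeping exactly right — matching the precise polynomial degrees in the defining conditions of $\Pi^-$ against the degrees appearing after integration by parts and differentiation of $v$ — is the technical heart of the lemma. A secondary, purely mesh-geometric obstacle is keeping the powers of $\ln N$ (or $\ln(1/\varepsilon)$ for the B-mesh) hidden inside $\max|\psi'|$ consistent across the three mesh types when summing the geometric series over the layer elements; this is where Assumption~\ref{ass:2}(ii), $\sigma \ge k+2$, is used, to ensure the "leakage" terms $e^{-\alpha\tau/\varepsilon} = \psi(1/2)^\sigma$ are of higher order than the target bound. The bounds for $\mathcal{D}^2_{ij}$ are obtained by interchanging the roles of $x$ and $y$ (and of $\Omega_x$ with $\Omega_y$) throughout.
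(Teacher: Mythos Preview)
The paper does not actually prove this lemma: its proof consists entirely of citations to \cite[Lemma~4.8]{Zhu:2dMC} for~\eqref{sup:L2} and to the proof of \cite[Theorem~4.1]{Cheng2020} for~\eqref{sup:L0}--\eqref{sup:L2:bilinear}. Your sketch --- Gauss--Radau orthogonality plus Bramble--Hilbert for~\eqref{sup:L2}, passage to $L^\infty$ via the element measure $(h_ih_j)^{1/2}$ for~\eqref{sup:L0}, the stability~\eqref{GR:stb} with an inverse inequality for~\eqref{stab:L0:bilinear}, and the solution decomposition of Proposition~\ref{proposition:reg:2d} together with the layer-mesh geometry for~\eqref{sup:L2:bilinear} --- is precisely the content of those cited arguments, so your approach is correct and coincides with the paper's (cited) route.

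On your stated obstacle: the clean way to see the superapproximation is to note that $\mathcal{D}^1_{ij}\bigl((I-\pi^-_x)g,\,v\bigr)=0$ for \emph{any} $g$, because both edge traces vanish by the right-endpoint Radau condition (applied on $K_{ij}$ for $(\cdot)^-_{i,y}$ and on $K_{i-1,j}$ for $(\cdot)^-_{i-1,y}$) and the volume term vanishes since $v_x\in\mathcal{P}^{k-1}(I_i)\otimes\mathcal{P}^k(J_j)$. Using this with the tensor splitting $\eta_z=(I-\pi^-_x)z+\pi^-_x(I-\pi^-_y)z$ and then an integration by parts in~$x$ on the $x$-continuous function $w:=(I-\pi^-_y)z$ yields the exact identity
\[
\mathcal{D}^1_{ij}(\eta_z,v)=-\dual{(I-\pi^-_y)\,z_x}{v}_{K_{ij}},
\]
which makes the bookkeeping you were worried about transparent (a single one-dimensional Radau estimate in~$y$ applied to $z_x$ suffices) and also shows that the bound lives on $K_{ij}$ alone despite the left trace nominally coming from the neighbouring element.
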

\begin{proof}
The estimate \eqref{sup:L2} is proved in \cite[Lemma 4.8]{Zhu:2dMC} for the S-mesh, but the argument remains valid for the BS- and B-meshes. See the proof of~\cite[Theorem 4.1]{Cheng2020}  for \eqref{sup:L0}--\eqref{sup:L2:bilinear}.
\end{proof}

\subsection{The approximation error $\bm \xi$}
\label{subsec:xi}

In this subsection we shall prove two lemmas bounding~$\bm \xi\in \spc^3$, which is the difference between the computed solution and the local Gauss-Radau projection of the exact solution.

The first result is motivated by \cite[Lemma 3.1]{Wang2015}
in which a relationship between the gradient and
the element interface jump of the numerical solution with the numerical solution of the gradient was derived from the element variational equations. 
Using this idea, we obtain a new bound for the derivative and jump of $\xi_u$ from the error equation on the local element 
and the bounds of the projection error $\bm\eta$ 
that were established in Section \ref{subsec:eta}.

	\begin{lemma}\label{relation-to-auxiliary}
There exists a constant $C$ such that 
		\begin{align*}
		\norm{(\xi_u)_x}_{\Omega_{x}}
		+\Bigg(\sum_{j=1}^{N}\sum_{i=N/2+1}^{N}
		h_i^{-1}\norm{\jump{\xi_u}_{i-1,y}}_{J_j}^2\Bigg)^{\frac12}
		&\leq
		C\varepsilon^{-\frac12}
		\left[\enorm{\bm\xi}+(N^{-1}\max|\psi^{\prime}|)^{k+1}\right],
		\\
		\norm{(\xi_u)_y}_{\Omega_{y}}
		+\Bigg(\sum_{i=1}^{N}\sum_{j=N/2+1}^{N}
		h_j^{-1}\norm{\jump{\xi_u}_{x,j-1}}_{I_i}^2\Bigg)^{\frac12}
		&\leq 
		C\varepsilon^{-\frac12}
		\left[\enorm{\bm\xi}+(N^{-1}\max|\psi^{\prime}|)^{k+1}\right].  \notag
		\end{align*}
	\end{lemma}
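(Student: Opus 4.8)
The plan is to extract, for each element $K_{ij}$ in the layer region, a discrete identity relating the interior derivative $(\xi_u)_x$ and the interface jump $\jump{\xi_u}_{i-1,y}$ to quantities we can already control, namely $\xi_p$ and the projection errors. The starting point is the third term $\mathcal{T}_3$ in the bilinear form $B$: taking the test function $\vph=(\vphu,0,0)$ supported on a single element shows that on each $K_{ij}$ the LDG solution satisfies a local equation of the form $\dual{P}{(\vphu)_x}_{K_{ij}} + (\text{edge terms with }P^+)= (\text{convective and source terms})$. Subtracting the analogous identity satisfied by the exact solution $\bm w=(u,p,q)$ (which holds because $B(\bm w;\cdot)=\dual{f}{\cdot}$ by consistency) yields an element-local error equation; writing $\bm e=\bm\eta-\bm\xi$ and recalling that the Gauss-Radau projector $\prou$ kills the lower-degree moments and the relevant edge trace, the $\eta_u$ contributions collapse to controllable $\mathcal{D}^1_{ij}$-type terms, leaving an identity that expresses the element-local bilinear form $\mathcal{D}^1_{ij}((\xi_u),v)$-analogue in terms of $\xi_p$ on $K_{ij}$, plus $\mathcal{D}^1_{ij}(\eta_u,\cdot)$, plus convective projection errors.

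Next I would choose the test function cleverly inside this local identity. To recover $\norm{(\xi_u)_x}_{K_{ij}}$ one takes $\vphu$ essentially equal to $(\xi_u)_x$ (which lies in $\mathcal{Q}^{k-1}\otimes\mathcal{Q}^k$, or can be adjusted into the test space), and to recover the jump $\jump{\xi_u}_{i-1,y}$ one adds a boundary-layer-type polynomial concentrated near the left edge $x=x_{i-1}$; this is exactly the mechanism of \cite[Lemma 3.1]{Wang2015} cited in the text. The resulting inequality will have the schematic form
\[
h_i^{1/2}\norm{(\xi_u)_x}_{K_{ij}} + h_i^{-1/2}\norm{\jump{\xi_u}_{i-1,y}}_{J_j}
\le C\Big[\norm{\xi_p}_{K_{ij}} + (\text{local }\eta\text{-terms}) + h_i^{1/2}\norm{\xi_u}_{K_{ij}}\Big],
\]
where an inverse estimate on $\mathcal{Q}^k(K_{ij})$ has been used to trade the derivative for an $h_i^{-1}$ factor. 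Squaring, summing over $K_{ij}\in\Omega_x$, and invoking the already-established bounds: $\varepsilon^{-1/2}\norm{\xi_p}\le\enorm{\bm\xi}$ (since $\varepsilon^{-1}\norm{\xi_p}^2\le\lnorm{\bm\xi}^2\le\enorm{\bm\xi}^2$), $\norm{\xi_u}_{\Omega_x\cup\Omega_y}$ controlled via the $\lnorm{\cdot}$-part of $\enorm{\bm\xi}$, the projection-error bounds \eqref{etau:layer:L2}, \eqref{etau:2d:1}, \eqref{etap:2d:1} of Lemma~\ref{lemma:GR}, and the superapproximation bound \eqref{sup:L2:bilinear} of Lemma~\ref{superapproximation:element}, should collapse the right-hand side to $C\varepsilon^{-1/2}[\enorm{\bm\xi}+(N^{-1}\max|\psi'|)^{k+1}]$. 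The $y$-direction estimate is obtained by the symmetric argument using $\mathcal{T}_3$'s second half, $\xi_q$ and $\Omega_y$.

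The main obstacle I anticipate is keeping the powers of $\varepsilon$ and of $h_i$ sharp when handling the convective terms $\mathcal{T}_4$ and the edge contributions: the local error equation mixes $\xi_p$ (of size $\varepsilon^{1/2}\cdot\enorm{\bm\xi}$ after rescaling) with convective errors involving $\xi_u$ and $\eta_u$ that carry no $\varepsilon$ weight, so one must verify that the worst term is indeed the $\varepsilon^{-1/2}\norm{\xi_p}$ one and that the convective and jump pieces do not produce an extra negative power of $h_i$ or $\varepsilon$ after the inverse estimate and the summation over the $O(N)$ layer elements. In particular the factor $\sqrt{h_j/h_i}$ appearing in \eqref{sup:L0}--\eqref{sup:L2:bilinear} must be tracked carefully when $K_{ij}\in\Omega_{22}$, where both $h_i$ and $h_j$ are small; the bound \eqref{sup:L2:bilinear} is stated precisely so that this summation works out, so the key is to reduce everything to a form where that lemma applies directly. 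Once the algebra is arranged so that every term on the right is either $\le C\varepsilon^{-1/2}\enorm{\bm\xi}$ or $\le C\varepsilon^{-1/2}(N^{-1}\max|\psi'|)^{k+1}$, the proof closes.
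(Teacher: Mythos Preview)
Your proposal has a genuine gap at the very first step: you are testing in the wrong slot of the LDG system. You propose taking $\vph=(\vphu,0,0)$ and isolating the $\mathcal{T}_3$ contribution $\dual{P}{\vphu_x}_{K_{ij}}+(\text{edge terms})$, which is the discrete version of the \emph{PDE} $-p_x-q_y+\textbf{a}\cdot\nabla u+bu=f$. That local equation drags in the $q$-derivative, the full convective terms, the reaction term, and the source $f$, and it has no $\varepsilon^{-1}$ factor in front of $\xi_p$. As a result your schematic inequality $h_i^{1/2}\norm{(\xi_u)_x}_{K_{ij}}+\dots\le C\norm{\xi_p}_{K_{ij}}+\dots$ carries the wrong scaling: in the layer $h_i\sim \varepsilon N^{-1}\min\varphi'$, so after summing over $\Omega_x$ you would pick up an extra factor of~$N$ compared to what is needed, and the convective pieces you have pushed to the right would have to be bounded by the very quantity you are trying to estimate.

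The paper instead tests with $\vph=(0,\vphp,0)$, i.e.\ it uses the discrete \emph{constitutive relation} $\varepsilon^{-1}p=u_x$ coming from the $\mathcal{T}_1+\mathcal{T}_2$ parts of $B$. On each $K_{ij}$ this gives the clean identity
\[
\varepsilon^{-1}\dual{\xi_p}{\vphp}_{K_{ij}}
-\dual{(\xi_u)_x}{\vphp}_{K_{ij}}
-\dual{\jump{\xi_u}_{i-1,y}}{\vphp^{+}_{i-1,y}}_{J_j}
=\varepsilon^{-1}\dual{\eta_p}{\vphp}_{K_{ij}}+\mathcal{D}^1_{ij}(\eta_u,\vphp),
\]
with no convection, no $q$, no $f$; the $\varepsilon^{-1}$ sits exactly where it is needed. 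Two explicit test functions then finish the job: $\vphp=\frac{x-x_{i-1}}{h_i}(\xi_u)_x\in\mathcal{Q}^k(K_{ij})$, which annihilates the jump term and, after a norm-equivalence scaling, yields $\norm{(\xi_u)_x}_{K_{ij}}\le C\varepsilon^{-1}(\norm{\eta_p}_{K_{ij}}+\norm{\xi_p}_{K_{ij}})+C|\mathcal{D}^1_{ij}(\eta_u,\vphp)|/\norm{\vphp}_{K_{ij}}$; and then $\vphp=\xi_u(x,y)-\xi_u(x_{i-1}^-,y)$, which makes $\vphp^+_{i-1,y}=\jump{\xi_u}_{i-1,y}$ and, combined with the first bound, controls $h_i^{-1}\norm{\jump{\xi_u}_{i-1,y}}_{J_j}^2$. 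Summing over $\Omega_x$ and invoking \eqref{etap:2d:1} and \eqref{sup:L2:bilinear} gives the result. Your identification of those two lemmas as the key ingredients is correct; what is missing is the right local equation and the two specific test-function choices that extract the derivative and the jump separately.
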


\begin{proof}
Since these two inequalities are derived in a similar way,
we shall prove only the first one.
Taking $\vphu=\vphq = 0$ in~\eqref{compact:form:2d},
one gets the element variational equation
(see \cite[(2.4b)]{Cheng2020})
\begin{align}\label{variational:equation:cell}
\varepsilon^{-1}\dual{\pN}{\vphp}_{K_{ij}}+
\dual{ \uN}{\vphp_x}_{K_{ij}}
-\dual{\widehat{\uN}_{i,y}}{\vphp^{-}_{i,y}}_{J_j}
+\dual{\widehat{\uN}_{i-1,y}}{\vphp^{+}_{i-1,y}}_{J_j}
=0,
\end{align}
for any $\vphp\in \mathcal{Q}^{k}(K_{ij})$ and $i,j=1,2,\dots,N$,
where $K_{ij}=I_i\times J_j=(x_{i-1},x_i)\times (y_{j-1},y_j)$ 
and the numerical flux $\widehat{\uN}$ is given by 
\[
\widehat{\uN}_{i,y}
=\begin{cases}
\uN^{-}_{i,y} &\text{for } i=1,2,\dots,N-1,\\
0             &\text{for }  i=0,N.\\
\end{cases}
\]
Since the exact solutions $u$ and $p=\varepsilon u_x$ also satisfy
the weak formulation \eqref{variational:equation:cell},
we get the Galerkin orthogonality property
\begin{align*}
\varepsilon^{-1}\dual{e_p}{\vphp}_{K_{ij}}+
\dual{e_u}{\vphp_x}_{K_{ij}}
-\dual{u-\hat U_{i,y}}{\vphp^{-}_{i,y}}_{J_j}
+\dual{u-\hat U_{i-1,y}}{\vphp^{+}_{i-1,y}}_{J_j}
=0,
\end{align*}
for any $\vphp\in \mathcal{Q}^{k}(K_{ij})$ and $i,j=1,2,\dots,N$.
Using the error decomposition \eqref{error:decomposition} 
and an integration by parts, 
for any $\vphp\in \mathcal{Q}^{k}(K_{ij})$ and $i,j=1,2,\dots,N$ we have 
\begin{align}\label{error:equation:cell}
\varepsilon^{-1}\dual{\xi_p}{\vphp}_{K_{ij}}
-\dual{(\xi_u)_x}{\vphp}_{K_{ij}}
&-\dual{\jump{\xi_u}_{i-1,y}}{\vphp^{+}_{i-1,y}}_{J_j}
\nonumber\\
&
=\varepsilon^{-1}\dual{\eta_p}{\vphp}_{K_{ij}}
+\mathcal{D}^{1}_{ij}(\eta_u,\vphp),
\end{align}
where $(\eta_u)^{-}_{0,y}=0$ and $\jump{\xi_u}_{0,y}=(\xi_u)^+_{0,y}$.

Take $\vphp|_{K_{ij}}=\frac{x-x_{i-1}}{h_i}(\xi_u)_x\in \mathcal{Q}^{k}(K_{ij})$ 
in~\eqref{error:equation:cell}. 
Then $\vphp^{+}_{i-1,y}=0$ and
\begin{align*}
\norm{\Big(\frac{x-x_{i-1}}{h_i}\Big)^{1/2}(\xi_u)_x}_{K_{ij}}^2
&=
\dual{\varepsilon^{-1}\xi_p-\varepsilon^{-1}\eta_p}{\vphp}_{K_{ij}}
-\mathcal{D}^{1}_{ij}(\eta_u,\vphp),
\end{align*}
which implies
\begin{align*}
\norm{\Big(\frac{x-x_{i-1}}{h_i}\Big)^{1/2}(\xi_u)_x}_{K_{ij}}
\leq
\varepsilon^{-1}(\norm{\eta_p}_{K_{ij}}+\norm{\xi_p}_{K_{ij}})
+\frac{|\mathcal{D}^{1}_{ij}(\eta_u,\vphp)|}{\norm{\vphp}_{K_{ij}}}
\end{align*}
via a Cauchy-Schwarz inequality.
A scaling argument using norm equivalence on a reference element then gives
\begin{align*}
\norm{(\xi_u)_x}_{K_{ij}}
&\leq 
C\norm{\Big(\frac{x-x_{i-1}}{h_i}\Big)^{1/2}(\xi_u)_x}_{K_{ij}}
\leq 
C\varepsilon^{-1}(\norm{\eta_p}_{K_{ij}}+\norm{\xi_p}_{K_{ij}})
+C\frac{|\mathcal{D}^{1}_{ij}(\eta_u,\vphp)|}{\norm{\vphp}_{K_{ij}}}
\end{align*}
for some constant $C$.
Using the definition of~$\enorm{\bm\xi}$, inequality \eqref{etap:2d:1} of Lemma~\ref{lemma:GR}, and \eqref{sup:L2:bilinear} of Lemma~\ref{superapproximation:element}, 
one obtains 
\begin{align}\label{xi:element}
\norm{(\xi_u)_x}^2_{\Omega_{x}}
=\sum_{j=1}^{N}\sum_{i=N/2+1}^{N}
\norm{(\xi_u)_x}_{K_{ij}}^2
\leq C\varepsilon^{-1}
\left[\enorm{\bm\xi}^2+(N^{-1}\max|\psi^{\prime}|)^{2(k+1)}\right].
\end{align}

We shall now make a different choice of $\vphp|_{K_{ij}}$ in~\eqref{error:equation:cell}. Take $\vphp|_{K_{ij}}=\xi_u(x,y)-\xi_u(x_{i-1}^{-},y)$. Then $\vphp^{+}_{i-1,y}=\jump{\xi_u}_{i-1,y}$.
By a Cauchy-Schwarz inequality we have
\begin{align*}
\left(\int_{x_{i-1}}^{x} (\xi_u)_x(s,y)\,\mathrm{d}s
+\jump{\xi_u}_{i-1,y}\right)^2
\leq 2h_i\int_{x_{i-1}}^{x_i} (\xi_u)^2_x(s,y)\,\mathrm{d}s
+2\jump{\xi_u}^2_{i-1,y},
\end{align*}
for any $x\in I_i$, which leads to
\begin{align}\label{ineq:1}
\norm{\vphp}^2_{K_{ij}}\leq Ch_i^2\norm{(\xi_u)_x}^2_{K_{ij}}
+2h_i\norm{\jump{\xi_u}_{i-1,y}}^2_{J_j}.
\end{align}
Using \eqref{error:equation:cell} and Young's inequality, one has
\begin{align*}
&\norm{\jump{\xi_u}_{i-1,y}}^2_{J_j}
=
\dual{\varepsilon^{-1}\xi_p-\varepsilon^{-1}\eta_p-(\xi_u)_x}{\vphp}_{K_{ij}}
-\mathcal{D}^{1}_{ij}(\eta_u,\vphp)
\\
&\leq
\frac{1}{4}h_i^{-1}\norm{\vphp}^2_{K_{ij}}
+Ch_i\left[
\varepsilon^{-2}(\norm{\eta_p}_{K_{ij}}^2+\norm{\xi_p}_{K_{ij}}^2)
+\norm{(\xi_u)_x}_{K_{ij}}^2
+\left(
\frac{|\mathcal{D}^{1}_{ij}(\eta_u,\vphp)|}{\norm{\vphp}_{K_{ij}}}
\right)^2
\right].
\end{align*}
Combining this inequality with \eqref{ineq:1} gives
\begin{align*}
h_i^{-1}\norm{\jump{\xi_u}_{i-1,y}}^2_{J_j}
\leq &\;
C\left[
\varepsilon^{-2}\left(\norm{\eta_p}_{K_{ij}}^2+\norm{\xi_p}_{K_{ij}}^2\right)
+\norm{(\xi_u)_x}_{K_{ij}}^2
+\left(\frac{|\mathcal{D}^{1}_{ij}(\eta_u,\vphp)|}{\norm{\vphp}_{K_{ij}}}\right)^2
\right].
\end{align*}
Using the definition of~$\enorm{\bm\xi}$, inequality \eqref{etap:2d:1} of Lemma~\ref{lemma:GR},  \eqref{sup:L2:bilinear} of Lemma~\ref{superapproximation:element}
and \eqref{xi:element}, we get
\begin{align}\label{xi:bry}
\sum_{j=1}^{N}\sum_{i=N/2+1}^{N}
h_i^{-1}\norm{\jump{\xi_u}_{i-1,y}}_{J_j}^2
\leq
C\varepsilon^{-1}\left[\enorm{\bm\xi}^2
+(N^{-1}\max|\psi^{\prime}|)^{2(k+1)}\right].
\end{align}

The desired inequality now follows from \eqref{xi:element} and \eqref{xi:bry}.
\end{proof}

Our second bound on $\bm\xi$  deals with the element boundary error.

\begin{lemma}\label{lemma:edge:estimate} 
There exists a constant $C$ such that
\begin{align*}
\sum_{j=1}^N\norm{(\xi_u)^+_{i,y}}^2_{J_j}
&\leq C\tau\sq^{-1} \left[
\enorm{\bm\xi}^2+(N^{-1}\max|\psi^{\prime}|)^{2(k+1)}\right]
\ \text{ for } i=N/2,\dots,N-1,
		\\
\sum_{i=1}^N\norm{(\xi_u)^+_{x,j}}^2_{I_i}
&\leq C\tau\sq^{-1} \left[
\enorm{\bm\xi}^2+(N^{-1}\max|\psi^{\prime}|)^{2(k+1)}\right]
\ \text{ for } j=N/2,\dots,N-1.
\end{align*}
\end{lemma}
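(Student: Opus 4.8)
The two inequalities are symmetric, so I would prove only the first. The aim is to control $\sum_{j=1}^N\norm{(\xi_u)^+_{i,y}}^2_{J_j}$ for a layer-side index $i\in\{N/2,\dots,N-1\}$ by telescoping across the refined strip $x_{N/2}\le x\le x_i$, using the already-established derivative and jump bounds of Lemma~\ref{relation-to-auxiliary}. The first step is to write, for each fixed $y\in J_j$ and each such $i$,
\[
(\xi_u)^+_{i,y}
=(\xi_u)^+_{N/2,y}
+\sum_{\ell=N/2+1}^{i}\Big[(\xi_u)^-_{\ell-1,y}-(\xi_u)^+_{\ell-1,y}\Big]^{\!(-1)}\!\!
+\sum_{\ell=N/2+1}^{i}\int_{x_{\ell-1}}^{x_{\ell}}(\xi_u)_x(s,y)\,\mathrm{d}s,
\]
i.e. to express the trace at $x_i$ as the trace at the transition point $x_{N/2}$ plus a sum of in-element increments $\int_{I_\ell}(\xi_u)_x$ plus a sum of interface jumps $\jump{\xi_u}_{\ell-1,y}$, with $\ell$ running from $N/2+1$ to $i$ (at most $N/2$ terms). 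So first I would set up this telescoping identity carefully, being mindful of the sign conventions for the jumps defined in Section~\ref{sec:LDG}.

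**Key steps.** After squaring and applying Cauchy--Schwarz to the two sums (each of length $\le N/2$), and then integrating over $J_j$ and summing over $j$, one gets
\[
\sum_{j=1}^N\norm{(\xi_u)^+_{i,y}}^2_{J_j}
\le C\sum_{j=1}^N\norm{(\xi_u)^+_{N/2,y}}^2_{J_j}
+CN\!\!\sum_{j=1}^N\sum_{\ell=N/2+1}^{N}\!\!\Big[\norm{\textstyle\int_{I_\ell}(\xi_u)_x\,\mathrm{d}s}^2_{J_j}+\norm{\jump{\xi_u}_{\ell-1,y}}^2_{J_j}\Big].
\]
For the increment terms I would bound $\big|\int_{I_\ell}(\xi_u)_x(s,y)\,\mathrm{d}s\big|^2\le h_\ell\int_{I_\ell}(\xi_u)_x^2\,\mathrm{d}s$ by Cauchy--Schwarz, so that $N\sum_\ell\norm{\int_{I_\ell}(\xi_u)_x}^2_{J_j}\le N\max_\ell h_\ell\cdot\norm{(\xi_u)_x}^2_{\Omega_x}$; since Assumption~\ref{ass:1}(ii) and \eqref{mesh:size:layer} give $h_\ell\le C N^{-1}\max|\psi'|\cdot\sigma\varepsilon/\alpha$ on the layer side — more precisely $N h_\ell\le C\tau$ because $h_\ell\le C\varepsilon N^{-1}\max\varphi'$ and $\tau=(\sigma\varepsilon/\alpha)\varphi(1/2)$ with $\max\varphi'$ comparable to $2\varphi(1/2)$ up to the mesh-dependent factors recorded in Table~\ref{table:functions} — this contributes $C\tau\norm{(\xi_u)_x}^2_{\Omega_x}$. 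Similarly $N\sum_{\ell}\norm{\jump{\xi_u}_{\ell-1,y}}^2_{J_j}= N\max_\ell h_\ell\cdot\sum_\ell h_\ell^{-1}\norm{\jump{\xi_u}_{\ell-1,y}}^2_{J_j}\le C\tau\sum_{\ell}h_\ell^{-1}\norm{\jump{\xi_u}_{\ell-1,y}}^2_{J_j}$. Both of these quantities, $\norm{(\xi_u)_x}^2_{\Omega_x}$ and $\sum_{j,\ell}h_\ell^{-1}\norm{\jump{\xi_u}_{\ell-1,y}}^2_{J_j}$, are bounded by $C\varepsilon^{-1}[\enorm{\bm\xi}^2+(N^{-1}\max|\psi'|)^{2(k+1)}]$ via Lemma~\ref{relation-to-auxiliary}, so these terms are exactly of the claimed form $C\tau\varepsilon^{-1}[\enorm{\bm\xi}^2+(N^{-1}\max|\psi'|)^{2(k+1)}]$.

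**The remaining term and the main obstacle.** The term $\sum_j\norm{(\xi_u)^+_{N/2,y}}^2_{J_j}$, the trace of $\xi_u$ at the transition point, must still be bounded. Here I would exploit that $\Omega_{11}$ (and the coarse part $\Omega_{12}$) is a region where $u$ and its derivatives are $O(1)$ (by \eqref{reg:S}, \eqref{reg:E12} with the factor $e^{-\alpha_2(1-y)/\varepsilon}$ bounded by $1$, and $\eta_u=O((N^{-1}\max|\psi'|)^{k+1})$ there), so that on the coarse mesh one has a cheap estimate; alternatively, use a trace/inverse inequality $\norm{(\xi_u)^+_{N/2,y}}^2_{J_j}\le C h_{N/2}^{-1}\norm{\xi_u}^2_{K_{N/2,j}}+Ch_{N/2}\norm{(\xi_u)_x}^2_{K_{N/2,j}}$ on the element just to the left, and then bound $\norm{\xi_u}$ on the coarse side by $\enorm{\bm\xi}$ together with $\norm{\eta_u}$ and the jump bounds — controlling $\xi_u$ itself (not just its flux variables) by the energy norm is delicate because $\enorm{\cdot}$ does not contain $\norm{V_u}^2$ directly, only through the $\big(b-\frac12\nabla\cdot\mathbf a\big)^{1/2}$-weighted $L^2$-term in $\lnorm{\cdot}$, which is fine since that weight is bounded below by $\beta^{1/2}$. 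I expect the main obstacle to be precisely this transition-point trace: making sure the factor that appears is $C\tau\varepsilon^{-1}$ and not something larger, which forces the coarse-side contribution to be absorbed into $\enorm{\bm\xi}^2$ with an $O(1)$ (not $\varepsilon$-dependent) constant, and then multiplied by $\tau\varepsilon^{-1}$ only because it is being lumped together with the layer-side terms for a clean statement — one should check that on the S- and BS-meshes $\tau\varepsilon^{-1}=C\max|\psi'|\le C\ln N$ keeps this harmless, while on the B-mesh $\tau\varepsilon^{-1}=C$, again harmless. A secondary technical point is keeping the sign bookkeeping of the jumps correct through the telescoping identity; I would double-check the boundary case $\ell-1=N/2$ where the left trace $(\xi_u)^-_{N/2,y}$ sits on the uniform/nonuniform interface.
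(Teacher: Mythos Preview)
Your approach has two genuine gaps that make it fail on the BS- and B-meshes.

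\textbf{Gap 1: the crude Cauchy--Schwarz.} Your claim that $N\max_\ell h_\ell\le C\tau$ on the layer side, which you justify by ``$\max\varphi'$ comparable to $2\varphi(1/2)$'', is only true for the S-mesh (where the layer mesh is uniform). On the BS-mesh $\max\varphi'=2N$ while $\varphi(1/2)=\ln N$, so $N\max_\ell h_\ell\approx N\varepsilon$ whereas $\tau\approx\varepsilon\ln N$; the ratio is $N/\ln N$, unbounded. On the B-mesh $\max\varphi'=2\varepsilon^{-1}$ and $\varphi(1/2)=\ln(1/\varepsilon)$, giving a ratio $\approx(\varepsilon\ln(1/\varepsilon))^{-1}$, again unbounded. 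The paper avoids this by using a \emph{weighted} Cauchy--Schwarz: write $\sum_\ell\int_{I_\ell}|(\xi_u)_x|\le\sum_\ell h_\ell^{1/2}\big(\int_{I_\ell}|(\xi_u)_x|^2\big)^{1/2}$ and then $(\sum_\ell h_\ell)(\sum_\ell\int_{I_\ell}|(\xi_u)_x|^2)$ with $\sum_\ell h_\ell\le\tau$; similarly, split each jump as $h_{\ell+1}^{1/2}\cdot h_{\ell+1}^{-1/2}\jump{\xi_u}_{\ell,y}$ so that the outer factor is again $\sum_\ell h_{\ell+1}\le\tau$. This is the step that produces the factor $\tau$ rather than $N\max_\ell h_\ell$.

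\textbf{Gap 2: the anchor of the telescoping.} You telescope to the \emph{left}, toward $x_{N/2}$, and are then stuck with $\sum_j\norm{(\xi_u)^+_{N/2,y}}^2_{J_j}$. Your proposed trace inequality ``on the element just to the left'' actually controls $(\xi_u)^-_{N/2,y}$, not $(\xi_u)^+_{N/2,y}$; passing between them costs only the jump (which is in the energy norm), but the inverse/trace estimate on either adjacent element produces a factor $h_{N/2}^{-1}\approx N$ (coarse side) or $h_{N/2+1}^{-1}$ (fine side), neither of which is bounded by $C\tau\varepsilon^{-1}$. The paper telescopes to the \emph{right}, toward $x_N=1$:
\[
(\xi_u)^+_{i,y}=-\sum_{\ell=i+1}^N\int_{I_\ell}(\xi_u)_x\,\mathrm{d}s-\sum_{\ell=i+1}^{N-1}\jump{\xi_u}_{\ell,y}+(\xi_u)^-_{N,y}.
\]
The anchor $(\xi_u)^-_{N,y}=-\jump{\xi_u}_{N,y}$ is already controlled by $\enorm{\bm\xi}$ (the boundary jump is part of the energy norm), and since $\tau\ge C\varepsilon$ this contribution is absorbed into the stated bound with no extra work. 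Telescoping toward the outflow boundary rather than the transition point is the missing idea.
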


\begin{proof}
We prove the first inequality; the second is similar.
For $i=N/2,\dots,N-1$ and $y\in J_j$ ($j=1,2,\dots,N$), one can write 
\[
(\xi_u)^+_{i,y}
= -\sum_{\ell=i+1}^N\int_{x_{\ell-1}}^{x_\ell}(\xi_u)_x(s,y)\mathrm{d}s
- \sum_{\ell=i+1}^{N-1}\jump{\xi_u}_{\ell,y}
+(\xi_u)^{-}_{N,y}.
\]
Then Cauchy-Schwarz inequalities give
	\begin{align*}
	|(\xi_u)^+_{i,y}|^2
	&\leq 
	3\Bigg[\sum_{\ell=i+1}^N\int_{I_{\ell}}|(\xi_u)_x|\mathrm{d}x\Bigg]^2
	+3\Bigg[\sum_{\ell=i+1}^{N-1}\jump{\xi_u}_{\ell,y}\Bigg]^2
	+3\jump{\xi_u}^2_{N,y}
	\\
	&\leq  
	3\Bigg[\sum_{\ell=i+1}^N
	h_\ell^{\frac12}\cdot\Big(\int_{I_{\ell}}|(\xi_u)_x|^2\mathrm{d}x\Big)^{\frac12} 
	\Bigg]^2
	+3\Bigg[\sum_{\ell=i+1}^{N-1} \Big(h^{\frac12}_{\ell+1} \cdot h^{-\frac12}_{\ell+1}\jump{\xi_u}_{\ell,y}\Big)\Bigg]^2
	+3\jump{\xi_u}^2_{N,y}
	\\
	&\leq  
	3\Bigg[\sum_{\ell=i+1}^N h_\ell\Bigg]
	\cdot\Bigg[\sum_{\ell=i+1}^N
	\int_{I_{\ell}}|(\xi_u)_x|^2\mathrm{d}x\Bigg]
	+3\Bigg[\sum_{\ell=i+1}^{N-1} h_{\ell+1}\Bigg]
	\cdot\Bigg[\sum_{\ell=i+1}^{N-1}  
	h^{-1}_{\ell+1}\jump{\xi_u}^2_{\ell,y}\Bigg]
	+3\jump{\xi_u}^2_{N,y}
	\\
	&\leq  
	3\tau\Bigg[
	\sum_{\ell=i+1}^N \int_{I_\ell}|(\xi_u)_x|^2\mathrm{d}x
	+\sum_{\ell=i+1}^{N-1} h^{-1}_{\ell+1}\jump{\xi_u}^2_{\ell,y}
	\Bigg]
	+3\jump{\xi_u}^2_{N,y},
	\end{align*}
	for $i=N/2,\dots,N-1$.
Using Lemma \ref{relation-to-auxiliary}, one gets
\begin{align*}
\sum_{j=1}^N\norm{(\xi_u)^+_{i,y}}^2_{J_j}
&\leq C\tau
\Bigg[\sum_{j=1}^N\sum_{\ell=i+1}^N \norm{(\xi_u)_x}^2_{K_{\ell j}}
+\sum_{j=1}^N\sum_{\ell=i+1}^{N-1} h^{-1}_{\ell+1}
\norm{\jump{\xi_u}_{\ell,y}}_{J_j}^2
\Bigg]
+3\sum_{j=1}^N\norm{\jump{\xi_u}_{N,y}}_{J_j}^2
\\
&\leq 
C\tau\sq^{-1}\Big(\enorm{\bm\xi}^2
+(N^{-1}\max|\psi^{\prime}|)^{2(k+1)}\Big)
+\frac{3}{\min\limits_{y\in[0,1]}\big[\frac12(\af)_{N,y}+\lambda_{1}\big]}\enorm{\bm\xi}^2,
\end{align*}
for $i=N/2,\dots,N-1$.
The desired result now follows since $\lambda_1\ge 0$, $\af(x,y)\geq \alpha_1>0$ in $\overline{\Omega}$, and $\tau\geq C\varepsilon$.
\end{proof}

\section{The supercloseness result}\label{sec:proof}

We now state and prove  the main result of the paper.

\begin{theorem}\label{thm:superconvergent}
Recall Assumptions~\ref{ass:1} and~\ref{ass:2}.
	Let $\bm w=(u,p,q)=(u,\varepsilon u_x,\varepsilon u_y)$
	be the solution of problem \eqref{cd:spp:2d}.
	Let $\wN=(\uN,\pN,\qN)\in \spc^3$ be the numerical solution of the LDG method \eqref{compact:form:2d}.
	Then for some constant $C>0$ one has the superclose property
    \begin{align}\label{super:energy:error}
    \enorm{\bm\Pi \bm w-\wN}\leq CM^{\star}N^{-(k+1)},
    \end{align}
	where $\bm\Pi \bm w \in \spc^3$ is the local Gauss-Radau projection of $\bm w $ and
	\begin{align}\label{bounding:Q}
	M^{\star}
	:=\sqrt{\frac{\mu}{\sq}}(\max|\psi^{\prime}|)^{k+1}
	=\begin{cases}
	(\ln N)^{k+1}                                  
	&\text{for the S-mesh},\\
	(\ln N)^{1/2}                                                 
	&\text{for the  BS-mesh},\\
	(\ln (1/\varepsilon))^{1/2}
	&\text{for the B-mesh}.
	\end{cases}
	\end{align}
	Furthermore, one has the $L^2$ error estimate
	\begin{align}\label{optimal:L2:error}
	\lnorm{\bm w-\wN} \leq CM^{\star}N^{-(k+1)}.
	\end{align}
\end{theorem}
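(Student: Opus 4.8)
The plan is to estimate $\enorm{\bm\xi}$ by exploiting the Galerkin orthogonality $B(\bm e;\bm\vph)=0$ for all $\bm\vph\in\spc^3$, which yields the key identity $\enorm{\bm\xi}^2 = B(\bm\xi;\bm\xi) = B(\bm\eta;\bm\xi)$. The task then reduces to bounding $B(\bm\eta;\bm\xi)$ term by term, following the split $B = \mathcal{T}_1+\mathcal{T}_2+\mathcal{T}_3+\mathcal{T}_4$. First I would dispose of $\mathcal{T}_1(\bm\eta;\bm\xi)$, which only involves $L^2$ inner products: the $\varepsilon^{-1}$ factors on $\eta_p,\eta_q$ are exactly matched by \eqref{etap:2d:1}, and the $(b-\nabla\cdot\mathbf{a})\eta_u$ term is controlled using \eqref{etau:2d:2} together with $\lnorm{\bm\xi}\le\enorm{\bm\xi}$; all these contribute $O(N^{-(k+1)}\max|\psi'|^{k+1})\,\enorm{\bm\xi}$ or better. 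For $\mathcal{T}_2(\bm\eta;\bm\xi)$ I would integrate by parts on each element to convert $\dual{\eta_u}{(\xi_p)_x}$ plus the interface terms into the $\mathcal{D}^1_{ij},\mathcal{D}^2_{ij}$ operators, then apply the superapproximation bound \eqref{sup:L2:bilinear} inside $\Omega_x\cup\Omega_y$ (giving $\varepsilon^{-1/2}(N^{-1}\max|\psi'|)^{k+1}\norm{\bm\xi}$, absorbed since $\lnorm{\bm\xi}$ carries $\varepsilon^{-1/2}\norm{\xi_p}$) and the coarse-region bound \eqref{sup:L2} on $\Omega_{11}$, where $h_i\sim N^{-1}$ makes the $h_i^{-1}$ harmless. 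The $\mathcal{T}_3$ term, by the defining orthogonality properties of $\Pi^-$ against $\mathcal{Q}^{k-1}$ and of $\Pi_x^+,\Pi_y^+$, has most pieces vanish; the surviving boundary pieces at $x=1$, $y=1$ are handled by \eqref{etap:2d:2} paired with the trace estimates built into $\enorm{\bm\xi}$.

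The crux is $\mathcal{T}_4(\eta_u;\xi_u)$, the convective term, which is $-\dual{\af\eta_u}{(\xi_u)_x}-\dots$ plus interface jump contributions involving $\jump{\xi_u}$. The naive Cauchy–Schwarz estimate $\norm{\eta_u}\,\norm{(\xi_u)_x}$ fails because there is no $L^2$ bound on $(\xi_u)_x$ in the energy norm. This is exactly where Lemma~\ref{relation-to-auxiliary} enters: it bounds $\norm{(\xi_u)_x}_{\Omega_x}$ and the scaled jump sums by $C\varepsilon^{-1/2}[\enorm{\bm\xi}+(N^{-1}\max|\psi'|)^{k+1}]$. So on the layer region I would write $|\dual{\af\eta_u}{(\xi_u)_x}_{\Omega_x}| \le C\norm{\eta_u}_{\Omega_x}\norm{(\xi_u)_x}_{\Omega_x}$ and then invoke the \emph{small-measure} bound \eqref{etau:layer:L2}, $\norm{\eta_u}_{\Omega_x\cup\Omega_y}\le C\mu^{1/2}(N^{-1}\max|\psi'|)^{k+1}$, to obtain a contribution of size $C\mu^{1/2}\varepsilon^{-1/2}(N^{-1}\max|\psi'|)^{k+1}[\enorm{\bm\xi}+(N^{-1}\max|\psi'|)^{k+1}] = CM^\star N^{-(k+1)}[\enorm{\bm\xi}+\dots]$ — precisely the claimed rate, with the factor $\sqrt{\mu/\varepsilon}(\max|\psi'|)^{k+1}=M^\star$. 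The interface jump terms of $\mathcal{T}_4$ are treated the same way, using Lemma~\ref{lemma:edge:estimate} (which provides the $\tau\varepsilon^{-1}$-weighted trace bound, i.e. another $\mu/\varepsilon$-type factor) together with \eqref{etau:2d:3}. On the coarse region $\Omega_{11}$, where $\eta_u$ is just $O(N^{-(k+1)})$ with no layer, one integrates by parts to move the derivative off $\xi_u$ and uses the standard orthogonality of $\Pi^-$, so that region contributes only $O(N^{-(k+1)})\enorm{\bm\xi}$.

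Collecting everything gives $\enorm{\bm\xi}^2 \le C M^\star N^{-(k+1)}\enorm{\bm\xi} + C(M^\star N^{-(k+1)})^2$, whence $\enorm{\bm\xi}\le CM^\star N^{-(k+1)}$ by Young's inequality, which is \eqref{super:energy:error}. For the $L^2$ estimate \eqref{optimal:L2:error}, I would write $\lnorm{\bm w-\wN}\le\lnorm{\bm\eta}+\lnorm{\bm\xi}\le\lnorm{\bm\eta}+\enorm{\bm\xi}$; the first term is $O(N^{-(k+1)}(\max|\psi'|)^{k+1})$ by \eqref{etau:2d:2} and \eqref{etap:2d:1}, and this is dominated by $M^\star N^{-(k+1)}$ since $M^\star\ge(\max|\psi'|)^{k+1}\cdot\sqrt{\mu/\varepsilon}$ with $\sqrt{\mu/\varepsilon}\ge1$. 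The main obstacle, as anticipated, is the convective term: making the loss of $\varepsilon^{-1/2}$ from Lemma~\ref{relation-to-auxiliary} affordable requires the sharpened layer-region estimate \eqref{etau:layer:L2} with its $\mu^{1/2}$ (rather than the crude $\tau^{1/2}=O(\varepsilon^{1/2}\,\text{polylog})$ that would come from $\sqrt{\text{area}}\times L^\infty$ on the B-mesh), and careful bookkeeping of which jump terms are genuinely present after the Gauss–Radau orthogonality is used.
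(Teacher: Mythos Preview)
Your proposal is correct and follows essentially the same approach as the paper: Galerkin orthogonality, standard bounds for $\mathcal{T}_1,\mathcal{T}_2,\mathcal{T}_3$, and the key pairing of Lemma~\ref{relation-to-auxiliary} with \eqref{etau:layer:L2} (plus Lemma~\ref{lemma:edge:estimate} for the transition interface at $i=N/2$) to control $\mathcal{T}_4$. The paper's presentation differs only in detail---it makes an explicit five-part split of $\mathcal{T}_4^x$, freezing the coefficient $a_1$ at a nodal value to invoke the superapproximation Lemma~\ref{superapproximation:element} on the coarse region and handling the variable-coefficient remainder separately, and it uses \eqref{etau:2d:1} rather than \eqref{etau:2d:3} for the edge traces of $\eta_u$---but your outline already contains all the essential ingredients.
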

\begin{proof}
Recall that  $\bm \xi = \wN-\bm\Pi \bm w$ and $\bm\eta =  \bm w-\bm\Pi \bm w$.
Using the definition of $\enorm{\cdot}$
and Galerkin orthogonality,
one obtains
\begin{align}\label{error:equation}
\enorm{\bm\xi}^2 =
B(\bm \xi;\bm\xi)=B(\bm \eta;\bm\xi)
=\sum_{i=1}^3\mathcal{T}_i(\bm \eta;\bm\xi)+\mathcal{T}_4(\eta_u;\xi_u),
\end{align}
where the $\mathcal{T}_i\,(i=1,2,3,4)$ are defined in \eqref{B:def:2d}.
From \cite[Theorem 4.1]{Cheng2020}, we have
\begin{align}\label{Ti}
|\mathcal{T}_i(\bm \eta;\bm\xi)|
&\leq C(N^{-1}\max|\psi^{\prime}|)^{k+1} \enorm{\bm\xi}
\ \text{ for } i=1,2,3,
\\
\label{bound:T4:old}
|\mathcal{T}_4(\eta_u;\xi_u)|
&\leq CQ^{\star}N^{-(k+1/2)} \enormsharp{\bm\xi},
\end{align}
where $\enormsharp{\cdot}$ (see~\cite[eq.~(4.13)]{Cheng2020}) is 	a stronger norm than the energy norm $\enorm{\cdot}$ 
and $Q^{\star}$ may depend weakly on $N$ and $\varepsilon$ (see~\cite[eq.~(4.9)]{Cheng2020}).
Clearly the estimate \eqref{bound:T4:old} of the convection term $\mathcal{T}_4(\eta_u;\xi_u)$ needs to be improved to yield our supercloseness result, and we do this now.
The crux of the argument is to derive improved estimates
of the derivative and the jump of $\xi_u$;
here Lemma \ref{relation-to-auxiliary} plays an important role,
as we shall see when proving~\eqref{T43} below.

Consider the $x$-direction component of $\mathcal{T}_4(\eta_u;\xi_u)$,
which is defined by
\begin{align*}
\mathcal{T}^x_4(\eta_u;\xi_u)
&:= -\dual{\af\eta_u}{(\xi_{u})_{x}}
-\sum_{j=1}^{N}\Big[\sum_{i=1}^{N}
\dual{(\af)_{i,y}(\eta_u)^{-}_{i,y}}{\jump{\xi_u}_{i,y}}_{J_j}
-\dual{\lambda_{1}(\eta_u)^{-}_{N,y}}{(\xi_u)^{-}_{N,y}}_{J_j}
\Big]
\\
&:= \sum_{i=1}^5 \mathcal{T}_{4i}(\eta_u;\xi_u),
\end{align*}
where we set $(\af)_{ij}:=\af(x_i,y_j)$ and define
\begin{align*}
\mathcal{T}_{41}(\eta_u;\xi_u) &:=
-\sum_{j=1}^N\sum_{i=1}^{N/2} (\af)_{ij}\mathcal{D}^1_{ij}(\eta_u,\xi_u),
\\
\mathcal{T}_{42}(\eta_u;\xi_u) &:=
-\sum_{j=1}^N\sum_{i=1}^{N/2}
\Big[\dual{(\af-(\af)_{ij})\eta_u}{(\xi_{u})_{x}}_{K_{ij}}
-\dual{((\af)_{i,y}-(\af)_{ij})(\eta_u)^{-}_{i,y}}{(\xi_u)^{-}_{i,y}}_{J_{j}}
\\
&\hspace{3cm}
+\dual{((\af)_{i-1,y}-(\af)_{ij})(\eta_u)^{-}_{i-1,y}}{(\xi_u)^{+}_{i-1,y}}_{J_{j}}\Big],
\\
\mathcal{T}_{43}(\eta_u;\xi_u) &:=
-\sum_{j=1}^N\sum_{i=N/2+1}^N
\dual{\af\eta_u}{(\xi_{u})_{x}}_{K_{ij}}
-\sum_{j=1}^N\sum_{i=N/2+1}^N
\dual{(\af)_{i,y}(\eta_u)^{-}_{i,y}}{\jump{\xi_u}_{i,y}}_{J_j},
\\
\mathcal{T}_{44}(\eta_u;\xi_u) &:=
-\sum_{j=1}^N (\af)_{N/2,j}
\dual{(\eta_u^{-})_{N/2,y}}{(\xi_u^{+})_{N/2,y}}_{J_j},
\\
\mathcal{T}_{45}(\eta_u;\xi_u) &:=
\sum_{j=1}^N\lambda_1\dual{(\eta_u^{-})_{N,y}}{(\xi_u^{-})_{N,y}}_{J_j}.
\end{align*}
Each of these terms will be estimated separately.
For $\mathcal{T}_{41}(\eta_u;\xi_u)$ we use 
the technique that 
bounded the term $\mathcal{T}_2(\cdot;\cdot)$ 
in \cite[Theorem 4.1]{Cheng2020},
but keeping in mind that the mesh size in the $x$ direction 
is now~$O(N^{-1})$.
For example, since $\sigma\geq k+2$, from  \eqref{sup:L0} and \eqref{stab:L0:bilinear}
of Lemma~\ref{superapproximation:element} and Cauchy-Schwarz inequalities we get
\begin{align}
\left|\mathcal{T}_{41}(\eta_S;\xi_u)\right|
&\leq C\sum_{j=1}^N\sum_{i=1}^{N/2}
\sqrt{\frac{h_{j}}{h_{i}}}\,
\Big[h_{i}^{k+2}
\norm{\partial_x^{k+2}S}_{L^\infty(K_{ij})}
+h_{j}^{k+2}\norm{\partial_y^{k+2}S}_{L^\infty(K_{ij})}
\Big]\norm{\xi_u}_{K_{ij}}
\nonumber\\
&\leq C\sum_{j=1}^N\sum_{i=1}^{N/2} N^{-(k+2)}\norm{\xi_u}_{K_{ij}}
\leq CN^{-(k+1)}\enorm{\bm\xi},
\nonumber\\
\left|\mathcal{T}_{41}(\eta_{E_{21}};\xi_u)\right|
&\leq C\sum_{j=1}^N\sum_{i=1}^{N/2}
\sqrt{\frac{h_{j}}{h_{i}}}
\norm{E_{21}}_{L^\infty(K_{ij})}
\norm{\xi_u}_{K_{ij}}
\leq C \left[\psi\left(\frac12\right)\right]^{\sigma} \sum_{j=1}^N\sum_{i=1}^{N/2} \norm{\xi_u}_{K_{ij}}
\nonumber\\
\label{T41:E21}
&
\leq CN \left[\psi\left(\frac12\right)\right]^{\sigma}\norm{\xi_u}
\leq CN^{-(k+1)}\enorm{\bm\xi}.
\end{align}
To bound $\mathcal{T}_{41}(\eta_{E_{12}};\xi_u)$,
we split it into two sums and estimate each separately.
Similarly to above,
	\begin{align*}
	&\left|\sum_{j=1}^{N/2}\sum_{i=1}^{N/2} (\af)_{ij}\mathcal{D}^1_{ij}(\eta_{E_{12}},\xi_u)\right|
	\leq C  \left[\psi\left(\frac12\right)\right]^{\sigma} \sum_{j=1}^{N/2}\sum_{i=1}^{N/2}
	\norm{\xi_u}_{K_{ij}}
	\leq CN^{-(k+1)}\enorm{\bm\xi}.
	\end{align*}
Using $\sigma\geq k+2$, \eqref{sup:L0}--\eqref{stab:L0:bilinear}, 
and $h_j/\varepsilon \geq CN^{-1}\min\varphi^{\prime}\geq CN^{-1}\geq h_i$
for $1\leq i\leq N/2,\ N/2+1\leq j\leq N$ from \eqref{mesh:size:layer},
we get
	\label{fnt:taupsi}
	\begin{align*}
	&\left|\sum_{j=N/2+1}^{N}\sum_{i=1}^{N/2}  (\af)_{ij}\mathcal{D}^1_{ij}(\eta_{E_{12}},\xi_u)\right|
	\\
	&
	\leq C\sum_{K_{ij}\in \Omega_{12}}
	\sqrt{\frac{h_{j}}{h_{i}}}
	\min\Big\{
	h_{i}^{k+2}\norm{\partial_x^{k+2}E_{12}}_{L^\infty(K_{ij})}
	+h_{j}^{k+2}\norm{\partial_y^{k+2}E_{12}}_{L^\infty(K_{ij})},
	\norm{E_{12}}_{L^\infty(K_{ij})}
	\Big\}\norm{\xi_u}_{K_{ij}}
	\\
	&\leq 
	C\sum_{K_{ij}\in \Omega_{12}}\sqrt{\frac{h_{j}}{h_{i}}}
	\min\left\{1,\left(\frac{h_{j}}{\varepsilon}\right)^{k+2}\right\}
	e^{-\alpha_2(1-y_j)/\varepsilon}\norm{\xi_u}_{K_{ij}}
	\\
	&\leq
	CN^{1/2}\left(\sum_{i=1}^{N/2}\sum_{j=N/2+1}^{N}h_j\right)^{1/2}
	\left(\max_{N/2+1\leq j\leq N}\Theta_j\right)^{k+2}\norm{\xi_u}
	\\
	&\leq
	CN\tau^{1/2}
	(N^{-1}\max|\psi^{\prime}|)^{k+2}\norm{\xi_u}
	\leq
	C(N^{-1}\max|\psi^{\prime}|)^{k+1}\enorm{\bm\xi},
	\end{align*}
	where $\Theta_j := e^{-\alpha_2(1-y_j)/[(k+2)\varepsilon]} \min\left\{1,h_{j}/\varepsilon\right\}
	\leq CN^{-1}\max|\psi^{\prime}|$ by~\cite[(3.6a)]{Cheng2020}.
	In the last inequality, we used  
	$\tau^{1/2}\max|\psi^{\prime}| \leq C$.
	This is trivial for the BS-mesh and B-mesh 
	(see Table~\ref{table:functions}).
    For the S-mesh, this inequality follows  from Assumption~\ref{ass:1} (ii) and
    $\tau^{1/2}\max|\psi^{\prime}|= C\ssq\ln^{3/2} N
    \leq CN^{-1/2}\ln^{3/2} N\leq C$ for $N\geq 4$.

Similarly to \eqref{T41:E21}, one has $\left|\mathcal{T}_{41}(\eta_{E_{22}};\xi_u)\right|
\leq CN^{-(k+1)}\enorm{\bm\xi}$. 
Combining these bounds, we get
\begin{equation}\label{T41}
\left|\mathcal{T}_{41}(\eta_{u};\xi_u)\right|
\leq C(N^{-1}\max|\psi^{\prime}|)^{k+1}\enorm{\bm\xi}.
\end{equation}

Next, a  Cauchy-Schwarz inequality, 
$\af-(\af)_{ij}=O(N^{-1})$ for $(x,y)\in K_{ij}$,
an inverse inequality 
and Lemma~\ref{lemma:GR} yield
\begin{align}\label{T42}
\left|\mathcal{T}_{42}(\eta_u;\xi_u)\right|
&\leq
C\sum_{j=1}^N\sum_{i=1}^{N/2} N^{-1}\Big(\norm{\eta_u}_{K_{ij}}\norm{(\xi_u)_x}_{K_{ij}}
\nonumber\\
&\hspace{2cm}
+\norm{(\eta_u)^{-}_{i,y}}_{J_{j}}
\norm{(\xi_u)^{-}_{i,y}}_{J_{j}}
+\norm{(\eta_u)^{-}_{i-1,y}}_{J_{j}}
\norm{(\xi_u)^{+}_{i-1,y}}_{J_{j}}
\Big)
\nonumber\\
&\leq C\Bigg(\sum_{j=1}^N\sum_{i=1}^{N/2}
(\norm{\eta_u}^2_{K_{ij}}
+N^{-1}\norm{(\eta_u)^{-}_{i,y}}^2_{J_{j}}
)\Bigg)^{1/2}\norm{\xi_u}
\nonumber\\
&\leq
C(N^{-1}\max|\psi^{\prime}|)^{k+1} \enorm{\bm\xi}.
\end{align}

The term $\mathcal{T}_{43}(\eta_{u};\xi_u)$ 
handles the convective error in the layer region
which is our main concern.
We shall use  Lemma \ref{relation-to-auxiliary}
to absorb the derivative and jump of the projection error
into the energy norm and improve the final convergence rate.
Invoking Lemma \ref{relation-to-auxiliary} 
and Lemma \ref{lemma:GR}, we get
\begin{align}
&\left|\mathcal{T}_{43}(\eta_{u};\xi_u)\right| \notag\\
&\leq C\norm{\eta_u}_{\Omega_x}\norm{(\xi_u)_x}_{\Omega_x}
\nonumber\\
&\qquad
+C
\left(\sum_{j=1}^{N}\sum_{i=N/2+1}^{N-1}
h_{i+1}\norm{(\eta_u)_{i,y}^{-}}_{J_j}^2
\right)^{\frac12}
\left(\sum_{j=1}^{N}\sum_{i=N/2+1}^{N-1}
h_{i+1}^{-1}\norm{\jump{\xi_u}_{i,y}}_{J_j}^2\right)^{\frac12}
\nonumber\\
&\qquad
+C\left(\sum_{j=1}^{N}
\norm{(\eta_u)_{N,y}^{-}}_{J_j}^2
\right)^{\frac12}
\left(\sum_{j=1}^{N}
\norm{\jump{\xi_u}_{N,y}}_{J_j}^2\right)^{\frac12}
\nonumber\\
&\leq
C\varepsilon^{-1/2}\left[\norm{\eta_u}_{\Omega_x}
+\left(\sum_{j=1}^{N}\sum_{i=N/2+1}^{N-1}
h_{i+1}\norm{(\eta_u)_{i,y}^{-}}_{J_j}^2\right)^{1/2}
\right]
\left(\enorm{\bm\xi}+(N^{-1}\max|\psi^{\prime}|)^{k+1}\right)
\nonumber\\
&\qquad
+C\left(\sum_{j=1}^{N}
\norm{(\eta_u)_{N,y}^{-}}_{J_j}^2
\right)^{\frac12}\enorm{\bm\xi}
\nonumber\\
&
\leq C \sqrt{\frac{\mu}{\sq}}
(N^{-1}\max|\psi^{\prime}|)^{k+1}
\left(\enorm{\bm\xi}+(N^{-1}\max|\psi^{\prime}|)^{k+1}\right),  \label{T43}
\end{align}
where in the last inequality we used \eqref{etau:layer:L2}, and also \eqref{etau:2d:1} to bound 
\begin{align*}
\varepsilon^{-1/2}&\left(\sum_{j=1}^{N}\sum_{i=N/2+1}^{N-1}
h_{i+1}\norm{(\eta_u)_{i,y}^{-}}_{J_j}^2\right)^{1/2}
\\
&\leq 
\varepsilon^{-1/2}\left(\max_{N/2+1\leq i\leq N-1}\sum_{j=1}^{N}
\norm{(\eta_u)_{i,y}^{-}}_{J_j}^2\right)^{1/2}
\left(\sum_{i=N/2+1}^{N-1} h_{i+1}\right)^{1/2}
\\
&\leq  C\sqrt{\frac{\tau}{\sq}}
N^{-(k+1)}(\max|\psi^{\prime}|)^{k+1/2}
=C\sqrt{\frac{\mu}{\sq}}(N^{-1}\max|\psi^{\prime}|)^{k+1}
\end{align*}
for our three layer-adapted meshes.

Lemma \ref{lemma:edge:estimate} and \eqref{etau:2d:1} 
of Lemma~\ref{lemma:GR} give
\begin{align}
\left|\mathcal{T}_{44}(\eta_{u};\xi_u)\right|
&\leq C\Bigg(\sum_{j=1}^{N}\norm{(\eta_u)_{N/2,y}^{-}}_{J_j}^2
\Bigg)^{1/2}
\Bigg(\sum_{j=1}^{N}\norm{(\xi_u)_{N/2,y}^{+}}_{J_j}^2
\Bigg)^{1/2}
\nonumber\\
&
\leq CN^{-(k+1)}(\max|\psi^{\prime}|)^{k+1/2}
\Big[\tau\sq^{-1}
\Big(\enorm{\bm\xi}^2+(N^{-1}\max|\psi^{\prime}|)^{2(k+1)}
\Big)\Big]^{1/2}
\nonumber\\
&
= C\sqrt{\frac{\mu}{\sq}}(N^{-1}\max|\psi^{\prime}|)^{k+1}
\left[\enorm{\bm\xi}+(N^{-1}\max|\psi^{\prime}|)^{k+1}\right].  
\label{T44}
\end{align}

Since $0\leq \lambda_{1}\leq C$, a Cauchy-Schwarz inequality 
and Lemma~\ref{lemma:GR} yield
\begin{align}
\left|\mathcal{T}_{45}(\eta_u;\xi_u)\right|
&
\le C\Bigg(\sum_{j=1}^N\norm{(\eta_u)^{-}_{N,y}}^2_{J_j}\Bigg)^{1/2}
\Bigg(\sum_{j=1}^N\norm{\jump{\xi_u}_{N,y}}^2_{J_j}\Bigg)^{1/2}  \notag\\
&\leq C(N^{-1}\max|\psi^{\prime}|)^{k+1} \enorm{\bm\xi}.  \label{T45}
\end{align}

Putting together \eqref{T41}--\eqref{T45}, we have shown that 
\[
\left|\mathcal{T}^x_{4}(\eta_u;\xi_u)\right|
\leq C\sqrt{\frac{\mu}{\sq}}(N^{-1}\max|\psi^{\prime}|)^{k+1}
	\left[\enorm{\bm\xi}
	+(N^{-1}\max|\psi^{\prime}|)^{k+1}\right].
\]
We can bound $\mathcal{T}^y_{4}(\eta_u;\xi_u)$ analogously
and thus we obtain 
\begin{equation}\label{T4}
\left|\mathcal{T}_{4}(\eta_u;\xi_u)\right|
\leq C\sqrt{\frac{\mu}{\sq}}(N^{-1}\max|\psi^{\prime}|)^{k+1}
	\left[\enorm{\bm\xi}
	+(N^{-1}\max|\psi^{\prime}|)^{k+1}\right].
\end{equation}

From \eqref{error:equation}, \eqref{Ti}
and \eqref{T4} it follows  that
\begin{equation}\label{super:xi:1}
\enorm{\bm\xi}\leq C\sqrt{\frac{\mu}{\sq}}(N^{-1}\max|\psi^{\prime}|)^{k+1} 
\end{equation}
and we have proved the superclose estimate \eqref{super:energy:error}.

The $L^2$-error estimate \eqref{optimal:L2:error} is now immediate from \eqref{super:xi:1}  and 
Lemma~\ref{lemma:GR}  since $ \bm w - \bm W = \bm \eta-\bm \xi$.
\end{proof}

\begin{remark}
Several energy-norm error estimates are known for DG methods on the S-mesh.
For piecewise polynomials of degree $k=1$,
the energy-norm error $\enorm{u^I-\uN}$ of the NIPG method,
where $u^I$ is the piecewise bilinear interpolant of $u$,
is $O(N^{-1}\ln^{3/2} N)$ --- see \cite[Theorem 2]{Roos2003}.
This convergence rate is slightly improved to $O(N^{-1}\ln N)$
for the LDG method with penalty flux in \cite[Lemma 4.2]{Zhu2013}.
For piecewise polynomials of degree $k>1$ in the LDG method~\eqref{compact:form:2d},
the energy-norm error $\enorm{\bm\Pi\bm w-\wN}$ 
is shown to be $O((N^{-1}\ln N)^{k+1/2})$ in \cite[Theorem 3.1]{Zhu:2dMC}.
Recently, in \cite[(4.22)]{Cheng2020}
a generalised convergence rate estimate $Q^{\star}N^{-(k+1/2)}$ for $\enorm{\bm\Pi\bm w-\wN}$ 
was established on the three layer-adapted meshes considered in this paper.
Consequently, the energy-norm error $\enorm{\bm w-\wN}$ is 
$O(Q^{\star}N^{-(k+1/2)})$,  which is shown to be sharp (up to a logarithmic factor) in numerical experiments.
This shows that the convergence rate $M^{\star}N^{-(k+1)}$ of~\eqref{super:energy:error} is a superclose-type superconvergence result.
\end{remark}

\begin{remark}\label{rem:optimalL2}
The $L^2$ error bound of \eqref{optimal:L2:error} is optimal
for the S-mesh, and it is optimal up to logarithmic factors for the BS-mesh and B-mesh.
\end{remark}

\begin{remark}\label{comments:ep:N}
	If we discard the assumption $\varepsilon\leq N^{-1}$ of Assumption~\ref{ass:1}(ii),
how does this affect our analysis? 
	 Instead of Theorem~\ref{thm:superconvergent}, 
	we obtain for all three layer-adapted meshes
	 the  general estimate 
	 (provided we make the mild assumption $\varepsilon\ln^3 N\leq C$ for the S-mesh)
	\begin{align}\label{a:general:estimate}
	\enorm{\bm\xi}&\leq
	C\sqrt{\frac{\mu}{\sq}}
	\Bigg\{
	\Big(1+(Nh)^{3/2}\Big)h^{k+1}
	+\Big(1+N(Nh)^{1/2}\Big)\left[\psi\left(\frac12\right)\right]^{\sigma}
	\nonumber\\
	&+\Big(1+(Nh)^{1/2}\Big)(N^{-1}\max|\psi^{\prime}|)^{k+1} 
	\Bigg\},
	\end{align}
	where 
	the first term comes  mainly from the global approximation 
	of the smooth component of the solution
	using the maximum mesh size $h:=\max_i h_i$,
	while the other two terms come from the smallness of the layer component
	in the coarse domain and 
	the approximation of the layer component in the refined region.
	The factor $Nh$ arises from the maximum ratio 
	$\max_{i,j}h_j/h_i$ that appears in \eqref{sup:L0}--\eqref{stab:L0:bilinear}.

	Let us discuss this estimate \eqref{a:general:estimate}
	for the three layer-adapted meshes.
	For the S-mesh one has
	$h\leq 2N^{-1}$, $\psi(1/2)=N^{-1}$, and  \eqref{a:general:estimate} yields 
	Theorem \ref{thm:superconvergent} 	provided $\sigma\geq k+2$.
	For the B-mesh one has $h\leq CN^{-1}$, and 
	if $\psi(1/2)=\varepsilon\leq N^{-\theta}$
	for some constant $0<\theta\leq 1$,  so
    Theorem \ref{thm:superconvergent} holds true
    provided $\sigma\geq (k+2)/\theta$.
	Finally, for the BS-mesh, 
	one has $\psi(1/2)=N^{-1}$ and $h=\max\{\varepsilon,N^{-1}\}$,
	which yields the bound
	\begin{align*}
	\enorm{\bm\xi}&\leq
	C(\ln N)^{1/2}(1+\varepsilon N)^{3/2}(\varepsilon+N^{-1})^{k+1},
	\end{align*}
	whose convergence behavior is unclear when $\varepsilon\geq CN^{-1}$. 
\end{remark}

\begin{remark}
One can combine Lemma \ref{lemma:edge:estimate}, \eqref{etau:2d:1} 
and \eqref{super:xi:1} to derive another superconvergence property:
\[
\left(\sum_{j=1}^N\norm{(e_u^{-})_{i,y}}^2_{J_j}\right)^{1/2}
+\left(\sum_{i=1}^N\norm{(e_u^{-})_{x,j}}^2_{I_i}\right)^{1/2}
\leq C \mu\sq^{-1} N^{-(k+1)}(\max|\psi^{\prime}|)^{k+3/2}
\]
for any $i,j \in \{N/2,\dots,N-1\}$.
\end{remark}

\section{Numerical experiments}
\label{sec:experiments}
\setcounter{equation}{0}

In this section, we illustrate the numerical performance
of the LDG method on the layer-adapted meshes of Section~\ref{subsec:layer:adapted:meshes} for a
two-dimensional singularly perturbed convection-diffusion test problem.
In our experiments 
we take $\alpha_1=1,\alpha_2=2$	 and $\sigma = k+2$
with the penalty parameters $\lambda_1=\lambda_{2}=0$.
The discrete linear systems are solved using LU decomposition, i.e., a direct linear solver.
All integrals are evaluated using the 5-point Gauss-Legendre quadrature rule.

We will present results for the three errors
\begin{align*}
\lnorm{\bm{w}-\wN},\quad \enorm{\Pi \bm w-\wN}, \quad \enorm{\bm w-\wN},
\end{align*}
together with their respective convergence rates, which are calculated from
\[
	r_2 := \frac{ \log (E_N/E_{2N}) }{\log 2 }
	\quad \textrm{or} \quad
	r_S := \frac{ \log (E_N/E_{2N}) }{\log \big(2\ln N/\ln(2N)\big) },
\]
where $E_N$ is the observed error when $N$ elements are used in each coordinate direction.
The quantities $r_2$ and $r_S$ measure the convergence rates from error bounds of the form $CN^{-r_2}$ and $C(N^{-1}\ln N)^{r_S}$ respectively.

\begin{example}\label{exa:1}
Consider the convection-diffusion problem
\begin{align*}			
-\varepsilon \Delta u + (2-x)u_x + (3-y^3) u_y + u &= f \ 
\mathrm{in} \  \Omega=(0,1)^2,
\\
u & = 0 \ \mathrm{on}\ \partial \Omega,
\end{align*}
with $f$ chosen such that
\[
u(x,y)
=  \left(1-e^{-(1-x)/\varepsilon}\right)y^3\left(1-e^{-2(1-y)/\varepsilon}\right)\sin x
\]
is the exact solution. This exact solution has precisely the layer behaviour that one expects in typical solutions of~\eqref{cd:spp:2d}.
\end{example}

Fix $k=2$, $N=32$ and $\varepsilon=10^{-2}$.
Figure~\ref{figure:1} displays plots of the numerical solution~$U$ 
and the error $u-U$ on the three types of layer-adapted meshes
computed by the LDG method.
We see that the LDG method yields good numerical approximations.
Furthermore, no oscillations are visible in the solution.
The largest errors appearing on the outflow boundary
decrease quickly in one element.
This demonstrates the ability of the LDG method to capture the boundary layers. 

In Tables~\ref{table:error:S}-\ref{table:error:B} we take $\varepsilon=10^{-8}$ and 
present the values of $\lnorm{\bm{w}-\wN}, \enorm{\Pi \bm w-\wN}$ and $\enorm{\bm w-\wN}$ and their
convergence rates on the S-mesh, BS-mesh and B-mesh. 
These numerical results confirm the supercloseness of the projection $\Pi\bm w$ 
and the optimal $L^2$ error estimate of Theorem~\ref{thm:superconvergent}.
Comparing these errors, one finds that the S-mesh produces the largest error while the BS-mesh and B-mesh
have comparable errors. 
A careful inspection reveals subtle differences between the last two meshes: 
for the values of $N$ that we used, 
the BS-mesh errors are slightly smaller than the B-mesh errors,
but the convergence rate on the B-mesh is better than the BS-mesh.

In Tables \ref{table:robust:S}-\ref{table:robust:B} we take $k=2$ and $N=128$, and test the robustness of the errors with respect to the singular perturbation parameter~$\varepsilon$. On the S-mesh and BS-mesh the errors vary only slightly with~$\varepsilon$.
On the B-mesh, the definition of~$M^{\star}$ in~\eqref{super:energy:error} indicates a theoretical dependence on~$\varepsilon$, 
but we see in Table~\ref{table:robust:B} that this dependence is insignificant.

\begin{figure}[h]
	\begin{minipage}{0.49\linewidth}
		\centerline{\includegraphics[width=2.6in,height=2.4in]{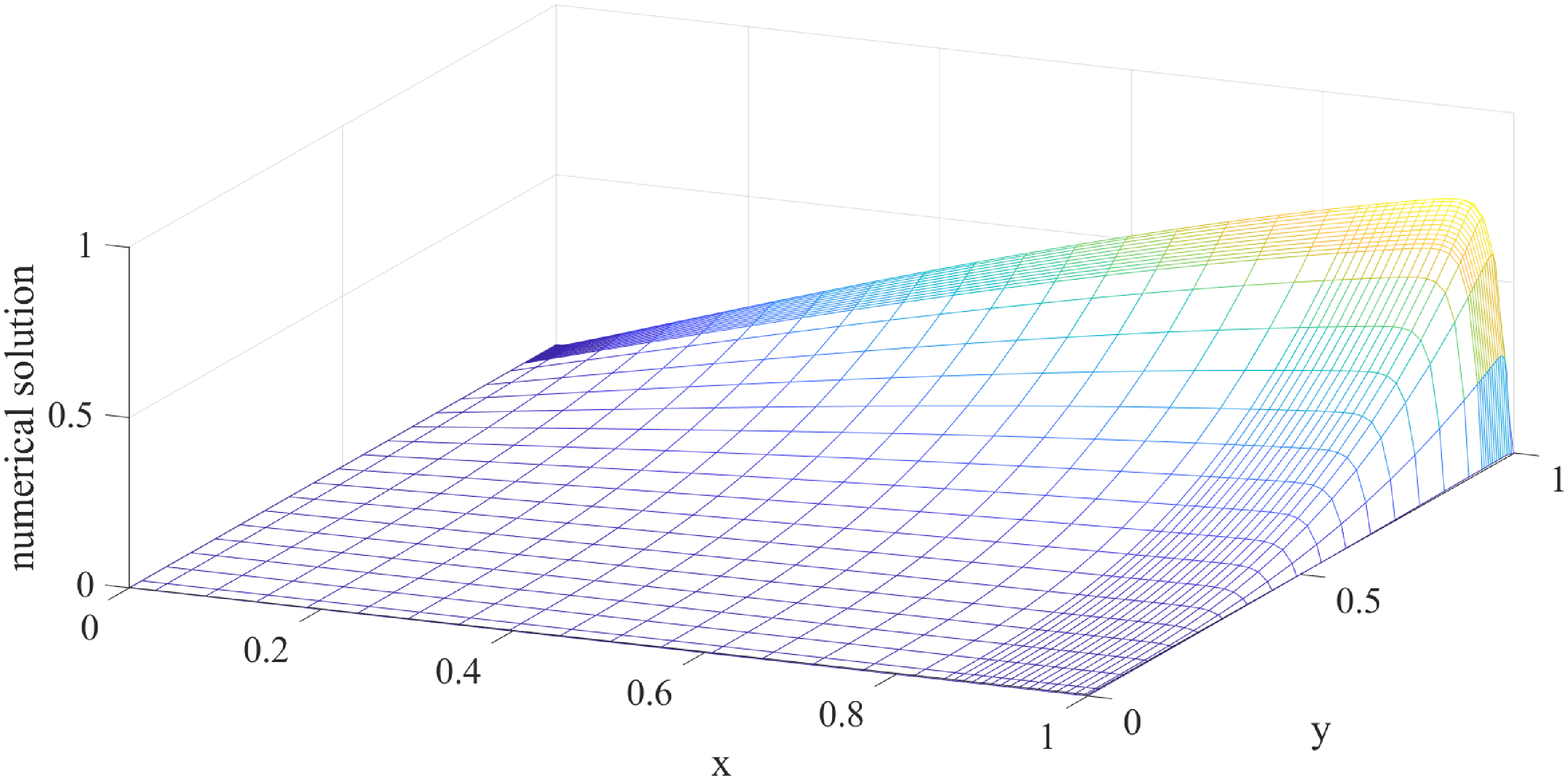}}
		\centerline{\includegraphics[width=2.6in,height=2.4in]{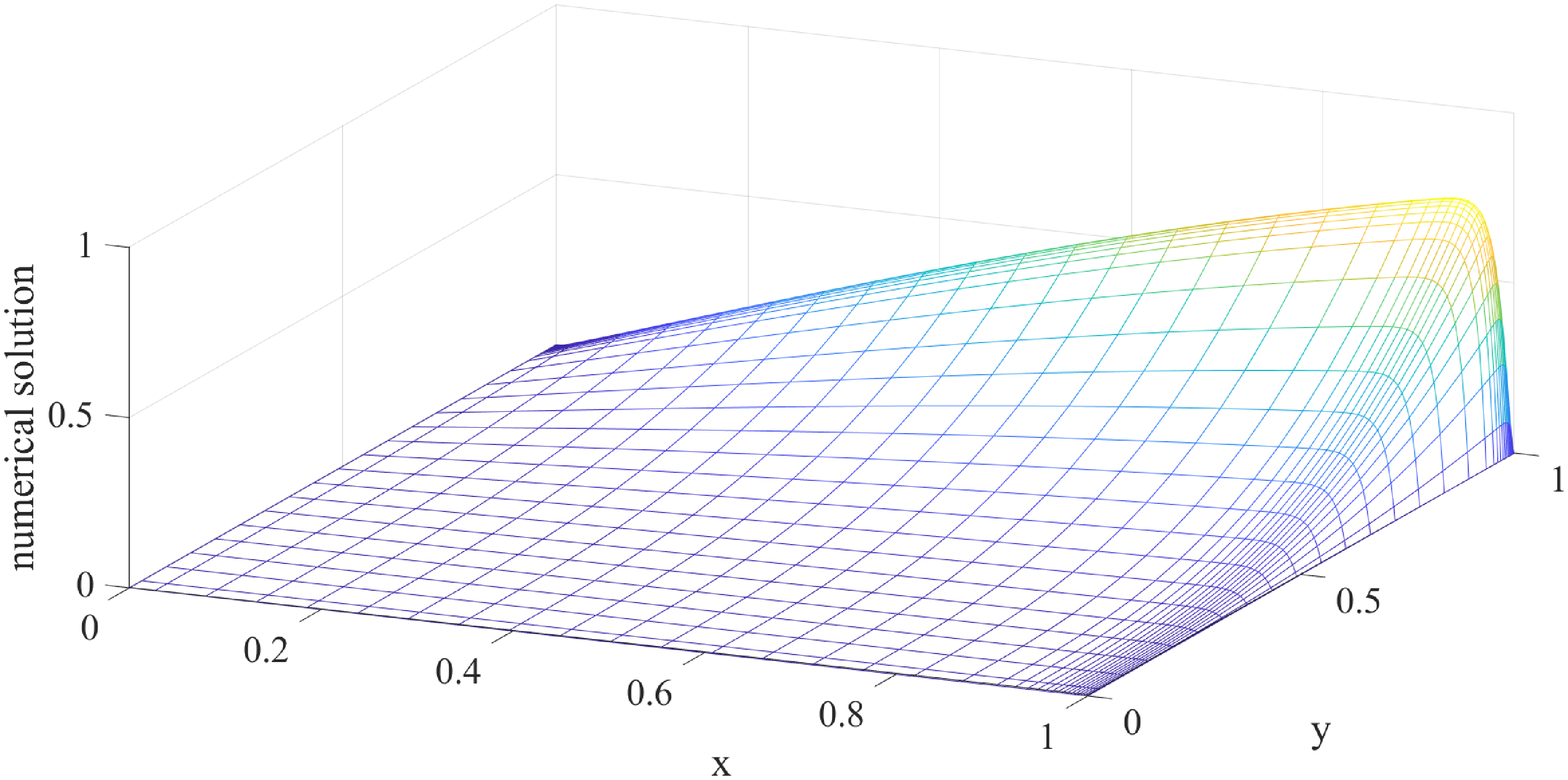}}
		\centerline{\includegraphics[width=2.6in,height=2.4in]{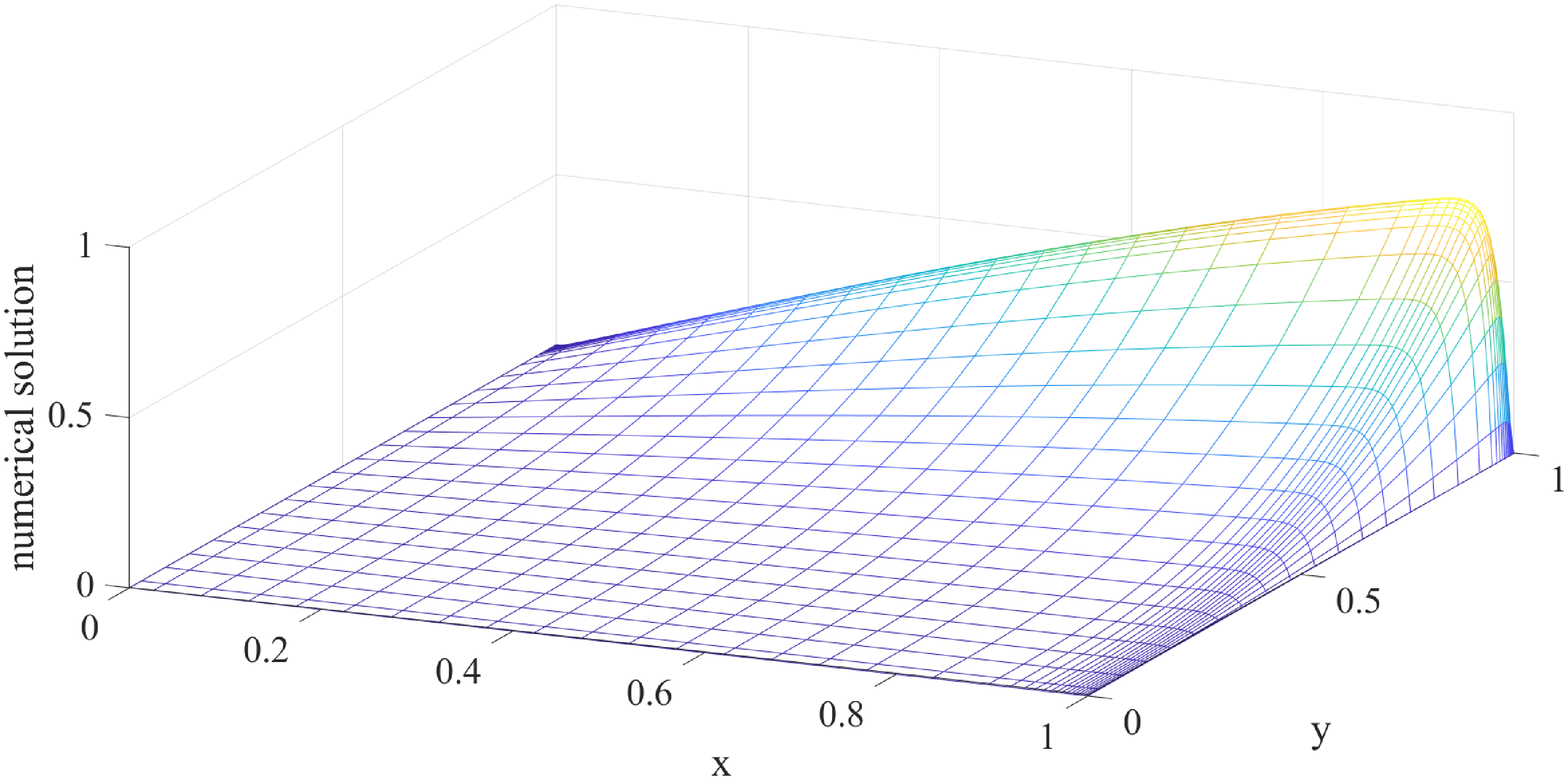}}
	\end{minipage}
	\begin{minipage}{0.49\linewidth}
		\centerline{\includegraphics[width=2.6in,height=2.4in]{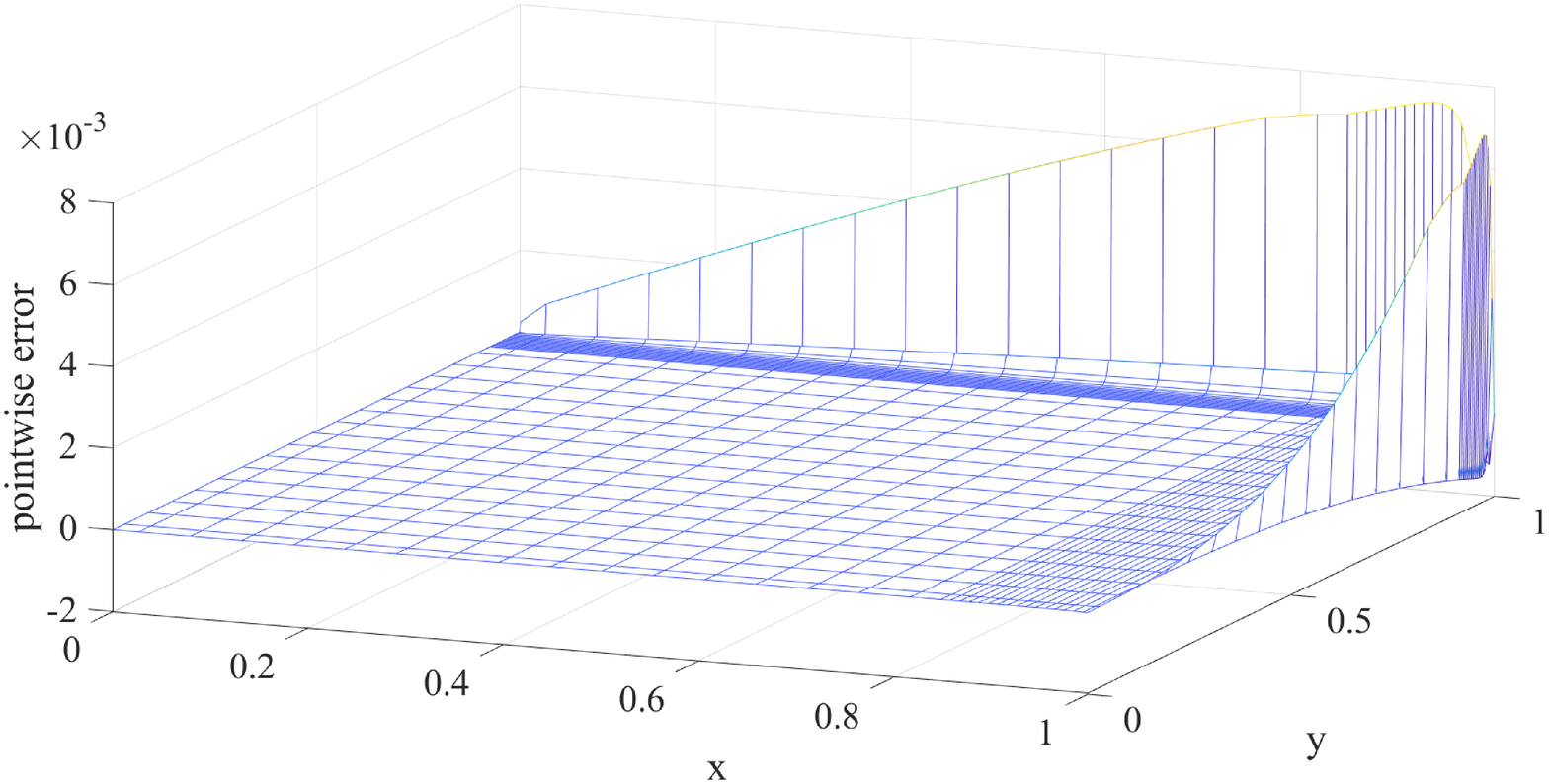}}
		\centerline{\includegraphics[width=2.6in,height=2.4in]{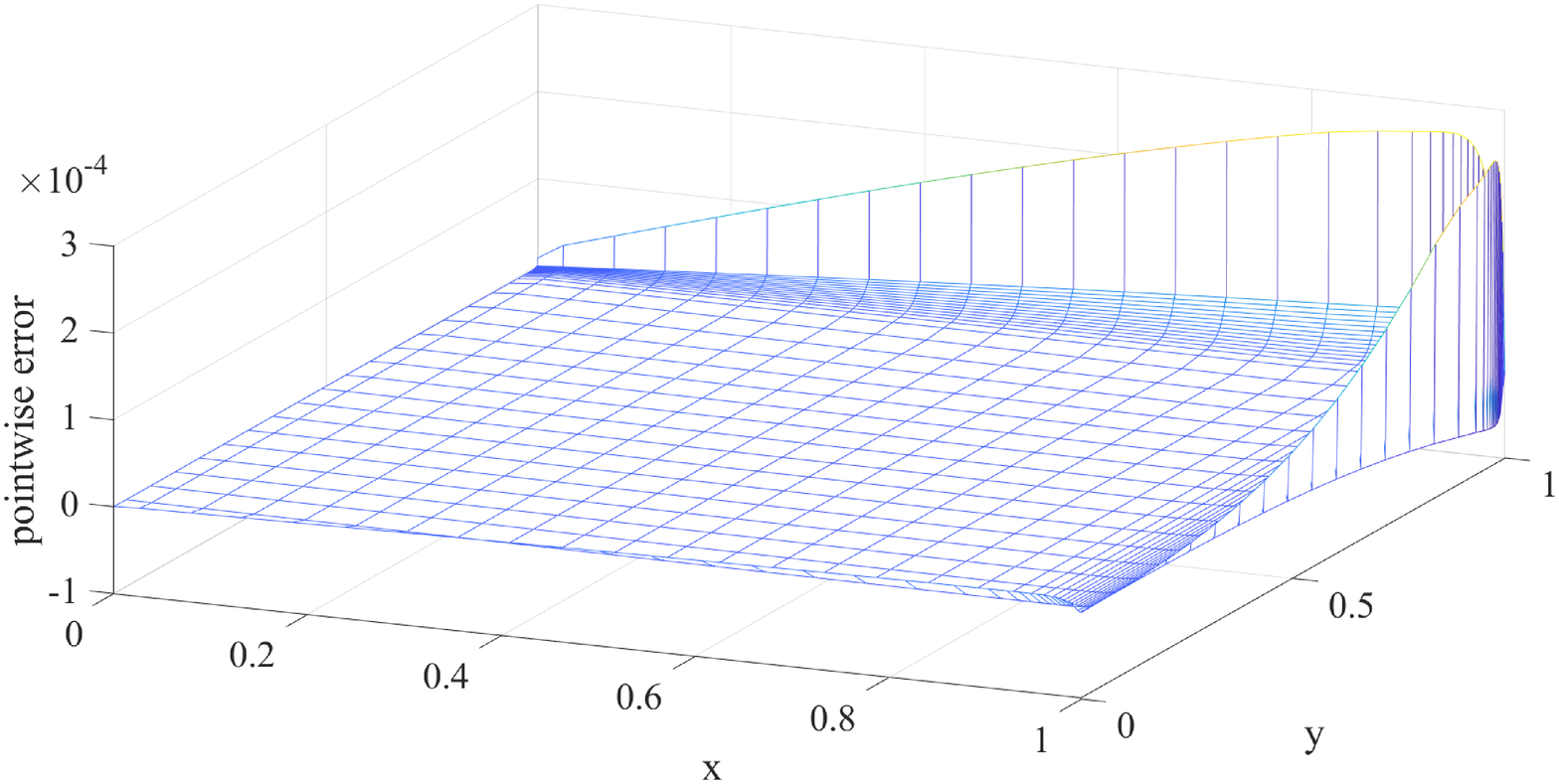}}
		\centerline{\includegraphics[width=2.6in,height=2.4in]{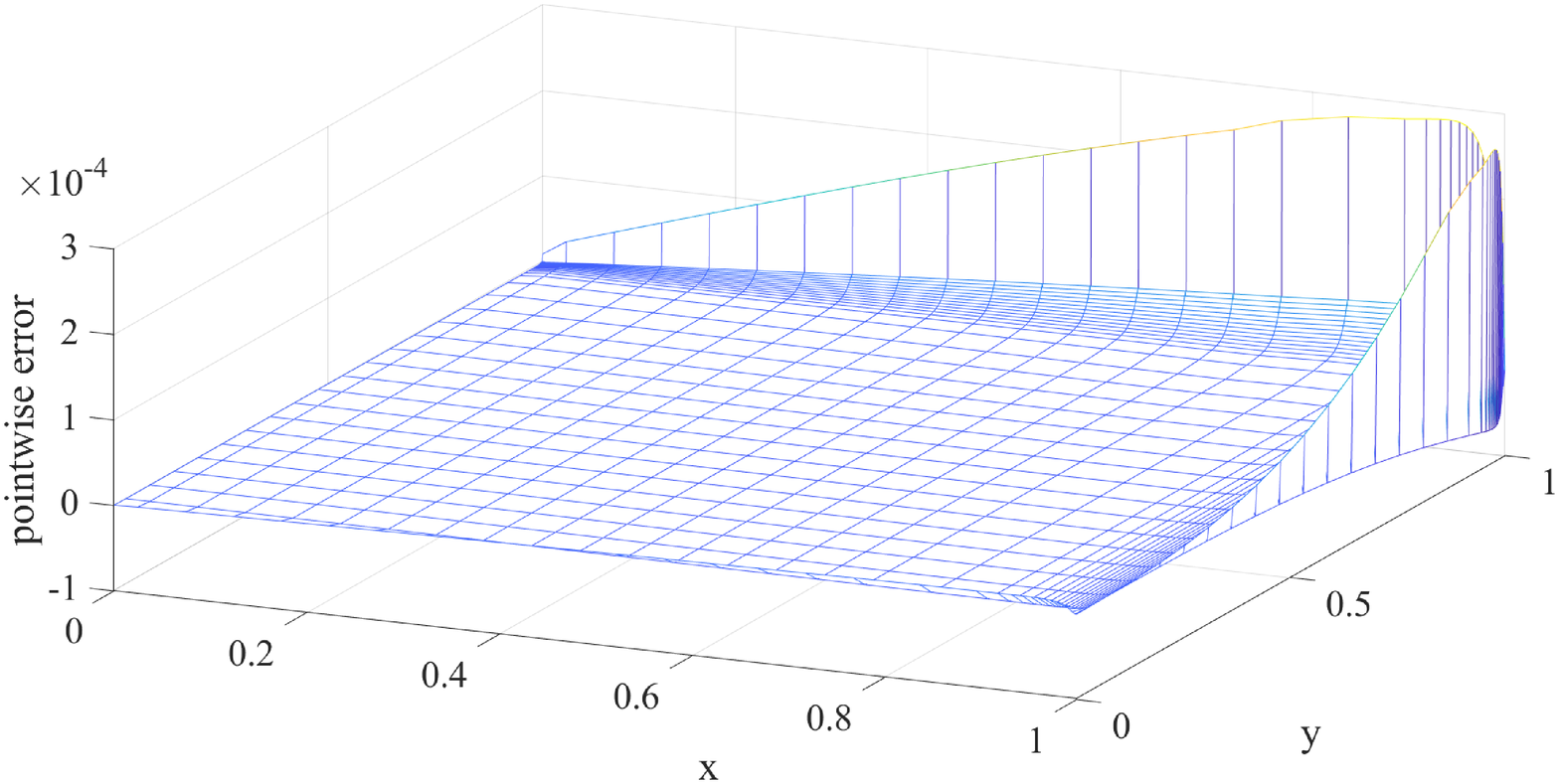}}
	\end{minipage}
	\caption{Numerical solution $U$ (left) and error $u-U$ (right) on three 
		$32\times32$ layer-adapted meshes. Top: S-mesh; Middle: BS-mesh; Bottom: B-mesh. Here $\varepsilon=10^{-2}$ and $k=2$.}
	\label{figure:1}
\end{figure}

\begin{table}[ht]
	\small
	\centering
	\caption{S-mesh.}
	\smallskip
	\label{table:error:S}
	\begin{tabular}{cccccccc}
		\toprule
		& $N$ & $\lnorm{\bm{w}-\wN}$ & $r_S$-rate 
		& $\enorm{\Pi \bm{w}-\wN}$ & $r_S$-rate  
		& $\enorm{\bm{w}-\wN}$ & $r_S$-rate
		\\
		\midrule
		$\mathcal{P}^1$& 16	&3.0199E-02	&-	&8.3413E-02	&-	&6.7252E-02	&-  \\
		& 32	&1.2885E-02	&1.8122 	&3.9597E-02	&1.5852 	&3.4133E-02	&1.4429   \\
		& 64	&4.8976E-03	&1.8936 	&1.6364E-02	&1.7299 	&1.6102E-02	&1.4708   \\
		& 128	&1.7197E-03	&1.9418 	&6.0899E-03	&1.8339 	&7.2263E-03	&1.4865   \\
		& 256	&5.7159E-04	&1.9683 	&2.1008E-03	&1.9018 	&3.1321E-03	&1.4939   \\
		$\mathcal{P}^2$& 16	&5.7757E-03	&-	&1.7540E-02	&-	&1.3364E-02	&-  \\
		& 32	&1.5824E-03	&2.7546 	&5.5245E-03	&2.4581 	&4.4268E-03	&2.3508   \\
		& 64	&3.6396E-04	&2.8771 	&1.4253E-03	&2.6522 	&1.2753E-03	&2.4363   \\
		& 128	&7.4631E-05	&2.9397 	&3.1717E-04	&2.7880 	&3.3600E-04	&2.4746   \\
		& 256	&1.4161E-05	&2.9700 	&6.3447E-05	&2.8756 	&8.3405E-05	&2.4899   \\
		\bottomrule
	\end{tabular}
\end{table}

\begin{table}[ht]
	\small
	\centering
	\caption{BS-mesh.}
	\smallskip
	\label{table:error:BS}
	\begin{tabular}{cccccccc}
		\toprule
		& $N$ & $\lnorm{\bm{w}-\wN}$ & $r_2$-rate & $\enorm{\Pi \bm{w}-\wN}$ & $r_2$-rate  
		& $\enorm{\bm{w}-\wN}$ & $r_2$-rate
		\\
		\midrule
		
		$\mathcal{P}^1$
		& 16  & 7.4770e-03 & -      & 1.5422e-02 & -      & 2.4479e-02 & -      \\
		& 32  & 2.0492e-03 & 1.8674 & 4.2764e-03 & 1.8506 & 8.9815e-03 & 1.4465 \\
		& 64  & 5.3808e-04 & 1.9292 & 1.1260e-03 & 1.9251 & 3.2362e-03 & 1.4727 \\
		& 128 & 1.3802e-04 & 1.9630 & 2.8893e-04 & 1.9625 & 1.1552e-03 & 1.4861 \\
		& 256 & 3.4963e-05 & 1.9809 & 7.3172e-05 & 1.9814 & 4.1043e-04 & 1.4930 \\
		
		$\mathcal{P}^2$
		& 16  & 5.3110e-04 & -      & 1.2764e-03 & -      & 1.6682e-03 & -      \\
		& 32  & 7.5788e-05 & 2.8089 & 1.8291e-04 & 2.8029 & 3.2007e-04 & 2.3818 \\
		& 64  & 1.0138e-05 & 2.9022 & 2.4466e-05 & 2.9023 & 5.8928e-05 & 2.4414 \\
		& 128 & 1.3120e-06 & 2.9499 & 3.1666e-06 & 2.9498 & 1.0630e-05 & 2.4708 \\
		& 256 & 1.6699e-07 & 2.9739 & 4.0197e-07 & 2.9778 & 1.9006e-06 & 2.4836 \\
		\bottomrule
	\end{tabular}
\end{table}

\begin{table}[ht]
	\small
	\centering
	\caption{B-mesh.}
	\smallskip
	\label{table:error:B}
	\begin{tabular}{cccccccc}
		\toprule
		& $N$ & $\lnorm{\bm{w}-\wN}$ & $r_2$-rate 
		& $\enorm{\Pi \bm{w}-\wN}$ & $r_2$-rate  
		& $\enorm{\bm{w}-\wN}$ & $r_2$-rate
		\\
		\midrule
		$\mathcal{P}^1$
		& 16  & 8.2628e-03 & -      & 1.7412e-02 & -      & 2.6065e-02 & -      \\
		& 32  & 2.1582e-03 & 1.9368 & 4.5461e-03 & 1.9374 & 9.2693e-03 & 1.4916 \\
		& 64  & 5.5248e-04 & 1.9658 & 1.1612e-03 & 1.9690 & 3.2879e-03 & 1.4953 \\
		& 128 & 1.3987e-04 & 1.9818 & 2.9339e-04 & 1.9847 & 1.1645e-03 & 1.4975 \\
		& 256 & 3.5198e-05 & 1.9905 & 7.3738e-05 & 1.9923 & 4.1206e-04 & 1.4987 \\
		$\mathcal{P}^2$
		& 16  & 6.4042e-04 & -      & 1.5414e-03 & -      & 1.9524e-03 & -      \\
		& 32  & 8.3189e-05 & 2.9446 & 2.0092e-04 & 2.9396 & 3.4568e-04 & 2.4977 \\
		& 64  & 1.0622e-05 & 2.9694 & 2.5647e-05 & 2.9698 & 6.1218e-05 & 2.4974 \\
		& 128 & 1.3427e-06 & 2.9838 & 3.2452e-06 & 2.9824 & 1.0834e-05 & 2.4984 \\
		& 256 & 1.6887e-07 & 2.9912 & 4.0274e-07 & 3.0104 & 1.9221e-06 & 2.4948 \\
		\bottomrule
	\end{tabular}
\end{table}

\begin{table}[ht]
	\small
	\centering
	\caption{S-mesh.}
	\smallskip
	\label{table:robust:S}
	\begin{tabular}{cccc}
		\toprule
		$\varepsilon$ & $\lnorm{\bm{w}-\wN}$  & $\enorm{\Pi \bm{w}-\wN}$   
		& $\enorm{\bm{w}-\wN}$ 
		\\
		\midrule
		$10^{-3}$	&7.4924E-05		&3.1508E-04		&3.3673E-04	  \\
		$10^{-4}$	&7.4659E-05		&3.1696E-04		&3.3607E-04	  \\
		$10^{-5}$	&7.4633E-05		&3.1715E-04		&3.3600E-04	  \\
		$10^{-6}$	&7.4630E-05		&3.1717E-04		&3.3599E-04	  \\
		$10^{-7}$	&7.4630E-05		&3.1717E-04		&3.3599E-04	  \\
		$10^{-8}$	&7.4631E-05		&3.1717E-04		&3.3600E-04	  \\
		$10^{-9}$	&7.4642E-05		&3.1729E-04		&3.3596E-04	  \\
		$10^{-10}$	&7.4654E-05		&3.1722E-04		&3.3560E-04	  \\
		\bottomrule
	\end{tabular}
\end{table}

\begin{table}[ht]
	\small
	\centering
	\caption{BS-mesh.}
	\smallskip
	\label{table:robust:BS}
	\begin{tabular}{cccc}
		\toprule
		$\varepsilon$ & $\lnorm{\bm{w}-\wN}$  & $\enorm{\Pi \bm{w}-\wN}$   
		& $\enorm{\bm{w}-\wN}$ 
		\\
		\midrule
		$10^{-3}$	&1.3141e-06		&3.1511e-06		&1.0630e-05   \\
		$10^{-4}$	&1.3121e-06		&3.1626e-06		&1.0631e-05	  \\
		$10^{-5}$	&1.3119e-06		&3.1637e-06		&1.0631e-05	  \\
		$10^{-6}$	&1.3119e-06		&3.1638e-06		&1.0631e-05   \\
		$10^{-7}$	&1.3119e-06		&3.1640e-06		&1.0631e-05	  \\
		$10^{-8}$	&1.3120e-06		&3.1666e-06		&1.0630e-05   \\
		$10^{-9}$	&1.3089e-06		&3.0693e-06 	&1.0707e-05   \\
		$10^{-10}$  &1.4170e-06 	&3.3061e-06		&1.0862e-05	  \\
		\bottomrule
	\end{tabular}
\end{table}

\begin{table}[ht]
	\small
	\centering
	\caption{B-mesh.}
	\smallskip
	\label{table:robust:B}
	\begin{tabular}{cccc}
		\toprule
		$\varepsilon$ & $\lnorm{\bm{w}-\wN}$  & $\enorm{\Pi \bm{w}-\wN}$   
		& $\enorm{\bm{w}-\wN}$ 
		\\
		\midrule
		$10^{-3}$	&1.3410e-06		&3.2052e-06		&1.0804e-05  \\
		$10^{-4}$	&1.3426e-06		&3.2342e-06		&1.0831e-05	  \\
		$10^{-5}$	&1.3427e-06		&3.2383e-06		&1.0834e-05	  \\
		$10^{-6}$	&1.3427e-06		&3.2388e-06		&1.0835e-05  \\
		$10^{-7}$	&1.3428e-06		&3.2394e-06		&1.0835e-05	  \\
		$10^{-8}$	&1.3427e-06		&3.2452e-06		&1.0834e-05  \\
		$10^{-9}$	&1.3441e-06		&3.3431e-06 	&1.0820e-05	  \\
		$10^{-10}$	&1.4046e-06		&4.9222e-06		&1.0738e-05	  \\
		\bottomrule
	\end{tabular}
\end{table}
\section{Concluding remarks}
\label{sec:conclusion}

In this paper we considered a singularly perturbed convection-diffusion problem posed on the unit square and derived a supercloseness result 
for the energy-norm error between the numerical solution computed by the LDG method
on three typical layer-adapted meshes and the local Gauss-Radau projection of the exact solution into the finite element space.
As a byproduct,
we got an (almost) optimal-order $L^2$-error estimate for the LDG solution.
These results rely on some new sharp estimates 
for the error of the Gauss-Radau projection that may be of independent interest.
Numerical experiments show that our theoretical bounds are sharp
(sometimes up to a logarithmic factor).
In  future work we aim to extend these results  
to singularly perturbed problems whose solutions have characteristic boundary layers, where superconvergence in the energy and balanced norms will be investigated.

%


\bibliography{ChengJiangStynes}

\begin{thebibliography}{10}

\bibitem{Castillo2002}
Paul Castillo, Bernardo Cockburn, Dominik Sch\"{o}tzau, and Christoph Schwab.
\newblock Optimal a priori error estimates for the {$hp$}-version of the local
  discontinuous {G}alerkin method for convection-diffusion problems.
\newblock {\em Math. Comp.}, 71(238):455--478, 2002.

\bibitem{Cheng2021:Calcolo}
Yao Cheng and Yanjie Mei.
\newblock Analysis of generalised alternating local discontinuous {G}alerkin
  method on layer-adapted mesh for singularly perturbed problems.
\newblock {\em Calcolo}, 58(4):Paper No. 52, 36, 2021.

\bibitem{Cheng2020}
Yao Cheng, Yanjie Mei, and Hans-G\"{o}rg Roos.
\newblock The local discontinuous {G}alerkin method on layer-adapted meshes for
  time-dependent singularly perturbed convection-diffusion problems.
\newblock {\em Comput. Math. Appl.}, 117:245--256, 2022.

\bibitem{CYWL22}
Yao Cheng, Li~Yan, Xuesong Wang, and Yanhua Liu.
\newblock Optimal maximum-norm estimate of the {LDG} method for singularly
  perturbed convection-diffusion problem.
\newblock {\em Appl. Math. Lett.}, 128:Paper No. 107947, 11, 2022.

\bibitem{Cockburn:Dong:2007}
Bernardo Cockburn and Bo~Dong.
\newblock An analysis of the minimal dissipation local discontinuous {G}alerkin
  method for convection-diffusion problems.
\newblock {\em J. Sci. Comput.}, 32(2):233--262, 2007.

\bibitem{Cockburn2001}
Bernardo Cockburn, Guido Kanschat, Ilaria Perugia, and Dominik Sch\"{o}tzau.
\newblock Superconvergence of the local discontinuous {G}alerkin method for
  elliptic problems on {C}artesian grids.
\newblock {\em SIAM J. Numer. Anal.}, 39(1):264--285, 2001.

\bibitem{Cockburn:Shu:LDG}
Bernardo Cockburn and Chi-Wang Shu.
\newblock The local discontinuous {G}alerkin method for time-dependent
  convection-diffusion systems.
\newblock {\em SIAM J. Numer. Anal.}, 35(6):2440--2463, 1998.

\bibitem{Cockburn:Shu:2001}
Bernardo Cockburn and Chi-Wang Shu.
\newblock Runge-{K}utta discontinuous {G}alerkin methods for
  convection-dominated problems.
\newblock {\em J. Sci. Comput.}, 16(3):173--261, 2001.

\bibitem{JKN18}
Volker John, Petr Knobloch, and Julia Novo.
\newblock Finite elements for scalar convection-dominated equations and
  incompressible flow problems: a never ending story?
\newblock {\em Comput. Vis. Sci.}, 19(5-6):47--63, 2018.

\bibitem{Linss10}
Torsten Lin\ss.
\newblock {\em Layer-adapted meshes for reaction-convection-diffusion
  problems}, volume 1985 of {\em Lecture Notes in Mathematics}.
\newblock Springer-Verlag, Berlin, 2010.

\bibitem{Miller2012}
J.~J.~H. Miller, E.~O'Riordan, and G.~I. Shishkin.
\newblock {\em Fitted numerical methods for singular perturbation problems}.
\newblock World Scientific Publishing Co. Pte. Ltd., Hackensack, NJ, revised
  edition, 2012.
\newblock Error estimates in the maximum norm for linear problems in one and
  two dimensions.

\bibitem{Reed1973}
W.H. Reed and T.R. Hill.
\newblock Triangular mesh methods for the neutron transport equation.
\newblock Technical Report LA-UR-73-479, Los Alamos Scientific Laboratory, Los
  Alamos, 1973.

\bibitem{Roos2008}
Hans-G\"{o}rg Roos, Martin Stynes, and Lutz Tobiska.
\newblock {\em Robust numerical methods for singularly perturbed differential
  equations}, volume~24 of {\em Springer Series in Computational Mathematics}.
\newblock Springer-Verlag, Berlin, second edition, 2008.
\newblock Convection-diffusion-reaction and flow problems.

\bibitem{Roos2003}
Hans-G{\"o}rg Roos and Helena Zarin.
\newblock The discontinuous galerkin finite element method for singularly
  perturbed problems.
\newblock In Eberhard B{\"a}nsch, editor, {\em Challenges in Scientific
  Computing - CISC 2002}, pages 246--267, Berlin, Heidelberg, 2003. Springer
  Berlin Heidelberg.

\bibitem{StySty18}
Martin Stynes and David Stynes.
\newblock {\em Convection-diffusion problems}, volume 196 of {\em Graduate
  Studies in Mathematics}.
\newblock American Mathematical Society, Providence, RI; Atlantic Association
  for Research in the Mathematical Sciences (AARMS), Halifax, NS, 2018.
\newblock An introduction to their analysis and numerical solution.

\bibitem{Wang2015}
Haijin Wang, Shiping Wang, Qiang Zhang, and Chi-Wang Shu.
\newblock Local discontinuous {G}alerkin methods with implicit-explicit
  time-marching for multi-dimensional convection-diffusion problems.
\newblock {\em ESAIM Math. Model. Numer. Anal.}, 50(4):1083--1105, 2016.

\bibitem{Xie2010MC}
Ziqing Xie and Zhimin Zhang.
\newblock Uniform superconvergence analysis of the discontinuous {G}alerkin
  method for a singularly perturbed problem in 1-{D}.
\newblock {\em Math. Comp.}, 79(269):35--45, 2010.

\bibitem{Xie2009JCM}
Ziqing Xie, Zuozheng Zhang, and Zhimin Zhang.
\newblock A numerical study of uniform superconvergence of {LDG} method for
  solving singularly perturbed problems.
\newblock {\em J. Comput. Math.}, 27(2-3):280--298, 2009.

\bibitem{Zarin2014}
Helena Zarin.
\newblock On discontinuous {G}alerkin finite element method for singularly
  perturbed delay differential equations.
\newblock {\em Appl. Math. Lett.}, 38:27--32, 2014.

\bibitem{Zhu:2018}
Huiqing Zhu and Fatih Celiker.
\newblock Nodal superconvergence of the local discontinuous {G}alerkin method
  for singularly perturbed problems.
\newblock {\em J. Comput. Appl. Math.}, 330:95--116, 2018.

\bibitem{Zhu2013}
Huiqing Zhu and Zhimin Zhang.
\newblock Convergence analysis of the {LDG} method applied to singularly
  perturbed problems.
\newblock {\em Numer. Methods Partial Differential Equations}, 29(2):396--421,
  2013.

\bibitem{Zhu:2dMC}
Huiqing Zhu and Zhimin Zhang.
\newblock Uniform convergence of the {LDG} method for a singularly perturbed
  problem with the exponential boundary layer.
\newblock {\em Math. Comp.}, 83(286):635--663, 2014.

\end{thebibliography}
\bibliographystyle{amsplain}

\end{document}